\newtheorem{thm}{Theorem}
\newtheorem{lem}[thm]{Lemma}
\newtheorem{prop}[thm]{Proposition}
\theoremstyle{definition}
\newtheorem{rem}[thm]{Remark}
\newtheorem{defn}[thm]{Definition}
\newcommand{\bmat}{\left[ \begin{matrix}}
\newcommand{\emat}{\end{matrix} \right]}
\numberwithin{equation}{section}
\numberwithin{figure}{section}
\newcommand{\Circ}{\mathop{\rm Circ}}
\newcommand{\Tr}{\operatorname{\!{^{\mbox{\scriptsize\sf T}}}}\!}
\newcommand{\script}[1]{\EuScript{#1}}
\newcommand{\trace} {\mbox{\rm tr}}
\newcommand{\Pb}{\mathbf P}
\newcommand{\Qb}{\mathbf Q}
\newcommand{\Sigmab}{\boldsymbol{\Sigma}}
\newcommand{\Sb}{\mathbf S}
\newcommand{\Tb}{\mathbf  T}
\newcommand{\Mb}{\mathbf  M}
\newcommand{\Ab}{\mathbf  A}
\newcommand{\Bb}{\mathbf  B}
\newcommand{\Fb}{\mathbf  F}
\newcommand{\Cb}{\mathbf  C}
\newcommand{\Eb}{\mathbf  E}
\newcommand{\Ib}{\mathbf  I}
\newcommand{\Yb}{\mathbf  Y}
\newcommand{\Xb}{\mathbf  X}
\newcommand{\Gb}{\mathbf  G}
\newcommand{\Wb}{\mathbf  W}
\newcommand{\wb}{\mathbf  w}
\newcommand{\Vb}{\mathbf  V}
\newcommand{\Gammab}{\mathbf  \Gamma}
\newcommand{\E}{{\mathbb E}}
\newcommand{\Zbb}{\mathbb Z}
\newcommand{\yb}{\mathbf  y}
\newcommand{\cb}{\mathbf  c}
\newcommand{\pb}{\mathbf  p}
\newcommand{\qb}{\mathbf  q}
\newcommand{\gb}{\mathbf  g}
\newcommand{\db}{\mathbf  d}
\newcommand{\eb}{\mathbf  e}
\newcommand{\gammab}{\boldsymbol{\gamma}}
\chardef\bslash=`\\ 
\begin{document}
\title[Modeling of Stationary Periodic Time Series]{Modeling of Stationary Periodic Time Series\\ by 
ARMA Representations}

\author[A. Lindquist]{Anders Lindquist\dag}
\author[G. Picci]{Giorgio Picci\ddag}
\thanks{\dag \ Shanghai Jiao Tong University, Shanghai, China, and Royal Institute of Technology, Stockholm, Sweden}
\thanks{\ddag \ Univerity of Padova, Padova, Italy}

\maketitle

\begin{center}
{\em Dedicated to Boris Teodorovich Polyak\\ on the occasion of his 80th birthday}
\end{center}

\begin{abstract} 
This is a survey of some recent results on the rational circulant covariance extension problem: Given a partial sequence $(c_0,c_1,\dots,c_n)$ of covariance lags $c_k=\E\{y(t+k)\overline{y(t)}\}$ emanating from a stationary periodic process $\{y(t)\}$ with period $2N>2n$, find all possible rational spectral functions of $\{y(t)\}$ of degree at most  $2n$ or, equivalently, all bilateral and unilateral ARMA models of order at most $n$, having this partial covariance sequence. Each representation is obtained as the solution of a pair of dual convex optimization problems. This theory is then reformulated in terms of circulant matrices and the connections to reciprocal processes and the covariance selection problem is explained. Next it is shown how the theory can be extended to the multivariate case. Finally, an application to image processing is presented. 
\end{abstract}

\section{Introduction}

The rational covariance extension problem to determine a rational spectral density given a finite number of covariance lags has been studied in great detail  \cite{Kalman,Gthesis,Georgiou,BLGM1,BGuL,SIGEST,Byrnes-L-97,BEL1,BEL2,PEthesis,Pavon-F-12}, and it can be formulated as a (truncated) trigonometric moment problem with a degree constraint. Among other things, it is the basic problem in partial stochastic realization theory \cite{Byrnes-L-97} and certain Toeplitz matrix completion problems. In particular, it provides a parameterization of the family of (unilateral) autoregressive moving-average (ARMA) models of stationary stochastic processes with the same finite sequence of covariance lags. We also refer the reader to the recent monograph \cite{LPbook}, in which this problem is discussed in the context of stochastic realization theory.
  
 Covariance extension for {\em periodic\/} stochastic processes, on the other hand, leads to matrix completion of Toeplitz matrices with circulant structure and to partial stochastic realizations in the form of {\em bilateral\/} ARMA models 
 \begin{displaymath}
\sum_{k=-n}^n q_k y(t-k) = \sum_{k=-n}^n p_k e(t-k)
\end{displaymath} 
for a stochastic processes $\{y(t)\}$, where $\{e(t)\}$ is the corresponding conjugate process. This connects up to a rich realization theory for reciprocal processes \cite{Krener-86,KFL91,Levy-F-02,Levy-F-K-90}. As we shall see there are also (forward and backward) unilateral ARMA representations for periodic processes. 
 
 In \cite{Carli-FPP}  a maximum-entropy approach to this circulant covariance extension problem was presented, 
providing a procedure for determining the unique bilateral AR model matching the covariance sequence. However, more recently it was discovered that  the circulant covariance extension problem can be recast in the context of the optimization-based theory of moment problems with rational measures developed in \cite{BGuL,SIGEST,BGL1,BEL2,BLN,BLmoments,BLkrein,Georgiou3,Georgiou-L-03} allowing for a complete parameterization of all bilateral ARMA realizations.  This led to a a complete theory for the scalar case \cite{LPcirculant}, which was then extended to to the multivariable case in \cite{LMPcirculantMult}. Also see \cite{RLskewperiodic}  for modifications of this theory to skew periodic processes and \cite{RKfast} for fast numerical procedures. 

The AR theory of \cite{Carli-FPP} has been successfully applied to image processing of textures \cite{Chiuso-F-P-05,Picci-C-08}, and we anticipate an enhancement of such methods by allowing for more general ARMA realizations. 

The present survey paper is to a large extent based on \cite{LPcirculant}, \cite{LMPcirculantMult} and \cite{Carli-FPP}. In Section~\ref{periodicsec} we begin by characterizing stationary periodic processes. In Section~\ref{covextsec}  we formulate  the rational covariance extension problem for periodic processes as a moment problem with atomic measure and present the solution in the context of the convex optimization approach of  \cite{BGuL,SIGEST,BGL1,BEL2,BLN,BLmoments,BLkrein}. These results are then reformulated in terms of circulant matrices in Section~\ref{circulantsec} and interpreted in term of bilateral ARMA models in Section~\ref{bilateralsec}  and in terms of unilateral ARMA models in Section~\ref{unilateralARMAsec}.  In Section~\ref{reciprocalsec} we investigate the connections to reciprocal processes of order $n$ \cite{Carli-FPP} and the covariance selection problem of Dempster \cite{Dempster}.  In Section~\ref{logarithmicsec} we consider the situation when both partial covariance data and logarithmic moment (cepstral) data is available.  To simplify the exposition the theory has so far been developed in the context of scalar processes, but in Section~\ref{multivariatesec} we show how it can be extended to the multivariable case. All of these results are illustrated by examples taken from \cite{LPcirculant} and \cite{LMPcirculantMult}. Section~\ref{imagesec} is devoted to applications in image processing.  



\section{Periodic stationary processes}\label{periodicsec}

Consider a zero-mean  full-rank stationary process $\{y(t)\}$,  in general complex-valued, defined on a finite interval $[-N+1,\,N]$ of the integer line $\Zbb$ and extended to all of $\Zbb$ as a periodic stationary process with  period $2N$ so that 
\begin{equation}
\label{periodic2N}
y(t +2kN) =y(t) 
\end{equation}
almost surely. By stationarity there is a representation
\begin{equation}
\label{ }
y(t)=\int_{-\pi}^\pi e^{it\theta}d\hat{y}(\theta), \quad \text{ where $\E \{|d\hat{y}|^2\}=dF(\theta)$}, 
\end{equation}
(see, e.g., \cite[p. 74]{LPbook}), and therefore
\begin{equation}
\label{F2c}
c_k:= \E\{y(t+k)\overline{y(t)}\} = \int_{-\pi}^\pi e^{ik\theta}dF(\theta). 
\end{equation}
Also, in view of \eqref{periodic2N},
\begin{displaymath}
\int_{-\pi}^\pi e^{it\theta}\left(e^{i2N\theta}-1\right)d\hat{y} =0,
\end{displaymath}
and hence
\begin{displaymath}
\int_{-\pi}^\pi \left|e^{i2N\theta}-1\right|^2dF =0,
\end{displaymath}
which shows that the support of $dF$ must be contained in $\{ k\pi/N; \, k=-N+1, \dots, N\}$. Consequently the spectral density of $\{y(t)\}$ consists of point masses on the discrete unit circle $\mathbb{T}_{2N}:=\{\zeta_{-N+1},\zeta_{-n+2},\dots,\zeta_N\}$, where
\begin{equation}
\label{zetadefn}
\zeta_k=e^{ik\pi/N}. 
\end{equation}
More precisely, define the function
\begin{equation}
\label{Phi}
\Phi(\zeta)=\sum_{k=-N+1}^Nc_k\zeta^{-k}
\end{equation} 
on $\mathbb{T}_{2N}$. This is the discrete Fourier transform (DFT) of the sequence $(c_{-N+1},\dots,c_N)$, which can be recovered by the inverse DFT
\begin{equation}
\label{discretemoments}
c_k=\frac{1}{2N}\sum_{j=-N+1}^N \Phi(\zeta_j)\zeta_j^k =\int_{-\pi}^\pi e^{ik\theta}\Phi(e^{i\theta})d\nu, 
\end{equation}
where  $\nu$ is a step function with steps $\frac{1}{2N}$ at each $\zeta_k$; i.e., 
\begin{equation}
\label{nu}
d\nu(\theta) =\sum_{j=-N+1}^N\delta(e^{i\theta}-\zeta_j)\frac{d\theta}{2N}.
\end{equation}
Consequently, by \eqref{F2c}, $dF(\theta)=\Phi(e^{i\theta})d\nu(\theta)$.  We note in passing  that 
\begin{equation}
\label{delta}
\int_{-\pi}^\pi e^{ik\theta}d\nu(\theta)=\delta_{k0},
\end{equation}
where $\delta_{k0}$ equals one for $k=0$ and zero otherwise. To see this, note that, for $k\ne 0$, 
\begin{displaymath}
\begin{split}
(1-\zeta_k)\int_{-\pi}^\pi e^{ik\theta}d\nu&= \frac{1}{2N} \sum_{j=-N+1 }^{N}\left(\zeta_k^j -\zeta_k^{j+1}\right)\\&=  \frac{1}{2N}\left(\zeta_k^{-N+1}-\zeta_k^{N+1}\right)=0.
\end{split}
\end{displaymath}

Since $\{y(t)\}$ is stationary and full rank, the Toeplitz matrix 
\begin{equation}
\label{Toeplitz}
\Tb_n=\begin{bmatrix} c_0&\bar{c}_1&\bar{c}_2&\cdots&\bar{c}_n\\
				c_1&c_0&\bar{c}_1&\cdots& \bar{c}_{n-1}\\
				c_2&c_1&c_0&\cdots&\bar{c}_{n-2}\\
				\vdots&\vdots&\vdots&\ddots&\vdots\\
				c_n&c_{n-1}&c_{n-2}&\cdots&c_0
 		\end{bmatrix}
\end{equation}
is positive definite for all $n\in\mathbb{Z}$. However, this condition is not sufficient for $c_0,c_1,\dots,c_n$ to be a bona-fide covariance sequence of a periodic process, as can be seen from the following simple example.  Consider a real-valued periodic stationary process $y$ of period  four. Then 
$$\E\left\{\begin{bmatrix}y(1)\\y(2)\\y(3)\\y(4)\end{bmatrix}\begin{bmatrix}y(1)&y(2)&y(3)&y(4)\end{bmatrix}\right\}
=\begin{bmatrix} c_0&c_1&c_2&c_3\\c_1&c_0&c_1&c_2\\c_2&c_1&c_0&c_1\\c_3&c_2&c_1&c_0\end{bmatrix}.$$
Then looking at the covariance matrix for two periods, we obtain
$$E\left\{\begin{bmatrix}y(1)\\y(2)\\\vdots\\y(8)\end{bmatrix}\begin{bmatrix}y(1)&y(2)&\cdots&y(8)\end{bmatrix}\right\}=
\begin{bmatrix} c_0&c_1&c_2&{\color{red}c_3}&c_0&c_1&c_2&{\color{red}c_3}\\
c_1&c_0&c_1&c_2&c_1&c_0&c_1&c_2\\
c_2&c_1&c_0&c_1&c_2&c_1&c_0&c_1\\
{\color{red}c_3}&c_2&c_1&c_0&{\color{red}c_3}&c_2&c_1&c_0\\
c_0&c_1&c_2&{\color{red}c_3}&c_0&c_1&c_2&{\color{red}c_3}\\
c_1&c_0&c_1&c_2&c_1&c_0&c_1&c_2\\
c_2&c_1&c_0&c_1&c_2&c_1&c_0&c_1\\
{\color{red}c_3}&c_2&c_1&c_0&{\color{red}c_3}&c_2&c_1&c_0
\end{bmatrix},$$
which is a Toeplitz matrix only when $c_3=c_1$. Therefore the condition $c_3=c_1$ is necessary. Consequently 
$$\Tb_8=\begin{bmatrix} c_0&c_1&c_2&c_1&c_0&c_1&c_2&c_1\\
c_1&c_0&c_1&c_2&c_1&c_0&c_1&c_2\\
c_2&c_1&c_0&c_1&c_2&c_1&c_0&c_1\\
c_1&c_2&c_1&c_0&c_1&c_2&c_1&c_0\\
c_0&c_1&c_2&c_1&c_0&c_1&c_2&c_1\\
c_1&c_0&c_1&c_2&c_1&c_0&c_1&c_2\\
c_2&c_1&c_0&c_1&c_2&c_1&c_0&c_1\\
c_1&c_2&c_1&c_0&c_1&c_2&c_1&c_0
\end{bmatrix} $$
is a {\em circulant matrix}, where the columns are shifted cyclically, the last component moved to the top. Circulant matrices will play a key role in the following. 

\section{The covariance extension problem for periodic processes}\label{covextsec}

Suppose that we are given a partial covariance sequence $c_0,c_1,\dots,c_n$ with $n< N$ such that the Toeplitz matrix $\Tb_n$ is positive definite. Consider the problem of finding and extension $c_{n+1},c_{n+2},\dots,c_{2N}$ so that the corresponding sequence  $c_0,c_1,\dots,c_N$ is the covariance sequence of a stationary process of period $2N$. 

In general this problem will have infinitely many solutions, and, for reasons that will become clear later, we shall restrict our attention to spectral function \eqref{Phi} which are rational in the sense that 
\begin{equation}
\label{Phi=P/Q}
\Phi(\zeta)=\frac{P(\zeta)}{Q(\zeta)},
\end{equation}
where $P$ and $Q$ are Hermitian pseudo-polynomials of degree at most $n$, that is of the form
\begin{equation}
\label{P}
P(\zeta)=\sum_{k=-n}^n p_k \zeta^{-k}, \quad p_{-k}=\bar{p}_k. 
\end{equation}

Let $\mathfrak{P}_+(N)$ be the cone of all pseudo-polynomials \eqref{P} that are positive on the discrete unit circle $\mathbb{T}_{2N}$, and let $\mathfrak{P}_+\subset\mathfrak{P}_+(N)$ be the subset of pseudo-polynomials \eqref{P} such that $P(e^{i\theta})>0$ for all $\theta\in [-\pi,\pi]$. Moreover let $\mathfrak{C}_+(N)$ be the dual cone of all 
partial covariance sequences $\cb=(c_0,c_1,\dots,c_n)$ such that 
\begin{displaymath}
\langle \cb,\pb\rangle :=\sum_{k=-n}^n c_k\bar{p}_k >0 \quad \text{for all $P\in\overline{\mathfrak{P}_+(N)}\setminus\{0\}$},
\end{displaymath}
and let $\mathfrak{C}_+$ be defined in the same way as the dual cone of $\mathfrak{P}_+$. It can be shown \cite{KreinN} that $\cb\in\mathfrak{C}_+$ is equivalent to the Toeplitz condition $\Tb_n>0$. Since $\mathfrak{P}_+\subset\mathfrak{P}_+(N)$, we have $\mathfrak{C}_+(N)\subset \mathfrak{C}_+$, so in general $\cb\in\mathfrak{C}_+(N)$ is a stricter condition than $\Tb_n>0$. 

The proof of the following theorem can be found in \cite{LPcirculant}. 

\begin{thm}\label{mainthm}
Let $\cb\in\mathfrak{C}_+(N)$. Then, for each $P\in\mathfrak{P}_+(N)$, there is a unique $Q\in\mathfrak{P}_+(N)$ such that 
\begin{displaymath}
\Phi=\frac{P}{Q}
\end{displaymath}
satisfies the moment conditions 
\begin{equation}
\label{momentconditions}
\int_{-\pi}^\pi e^{ik\theta}\Phi(e^{i\theta})d\nu(\theta) =c_k, \quad k=0,1,\dots,n.
\end{equation}
\end{thm}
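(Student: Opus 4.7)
The plan is to cast existence and uniqueness of $Q$ as a pair of dual convex optimization problems in the Byrnes--Georgiou--Lindquist framework, adapted to the atomic measure $\nu$. For fixed $P\in\mathfrak{P}_+(N)$, I would start with the primal entropy-like problem of maximizing
\begin{displaymath}
I_P(\Phi)=\int_{-\pi}^\pi P(e^{i\theta})\log\Phi(e^{i\theta})\,d\nu(\theta)
\end{displaymath}
over strictly positive functions $\Phi$ on $\mathbb{T}_{2N}$ subject to the moment conditions \eqref{momentconditions}. The hypothesis $\cb\in\mathfrak{C}_+(N)$ is exactly what makes the constraint set have nonempty relative interior, so a strictly positive feasible $\Phi$ exists.

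Next I would dualize. Introducing multipliers $q_k$ (with $q_{-k}=\bar q_k$ to respect the Hermitian structure of $\cb$) and packaging them into the pseudo-polynomial $Q(\zeta)=\sum_{k=-n}^{n}q_k\zeta^{-k}$, pointwise maximization of the Lagrangian in $\Phi$ gives the stationary relation $\Phi^\star=P/Q$, valid precisely when $Q\in\mathfrak{P}_+(N)$. Substituting back produces the dual functional
\begin{displaymath}
J_P(Q)=\langle\cb,\qb\rangle-\int_{-\pi}^\pi P(e^{i\theta})\log Q(e^{i\theta})\,d\nu(\theta)
\end{displaymath}
on the finite-dimensional open cone $\mathfrak{P}_+(N)$, and the theorem reduces to showing that $J_P$ admits a unique minimizer there. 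Strict convexity is a routine consequence of the strict convexity of $-\log$ together with positivity of $P$ on the whole support $\mathbb{T}_{2N}$ of $\nu$, once one notes that the evaluation map from the $(2n+1)$-dimensional space of Hermitian pseudo-polynomials of degree $\le n$ into $\mathbb{C}^{\mathbb{T}_{2N}}$ is injective (since $n<N$).

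The main obstacle is coercivity of $J_P$ on $\mathfrak{P}_+(N)$. Two modes of degeneration must be ruled out. If $\|Q_k\|\to\infty$, I would normalize $Q_k=\rho_k\tilde Q_k$ with $\|\tilde Q_k\|=1$ and pass to a convergent subsequence $\tilde Q_k\to\tilde Q_\infty\in\overline{\mathfrak{P}_+(N)}\setminus\{0\}$; the very definition of the dual cone $\mathfrak{C}_+(N)$ then yields $\langle\cb,\tilde\qb_\infty\rangle>0$, so the linear term of $J_P$ grows like $\rho_k$, which dominates the at-most-logarithmic growth of the integral term. If instead $Q_k$ remains bounded but approaches the boundary of $\mathfrak{P}_+(N)$, some value $Q_k(\zeta_j)\to 0$ for a node $\zeta_j\in\mathbb{T}_{2N}$; since $\nu$ carries a positive mass $\tfrac{1}{2N}$ at $\zeta_j$ and $P(\zeta_j)>0$, the term $-\int P\log Q_k\,d\nu$ blows up to $+\infty$. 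Coercivity plus strict convexity produce a unique minimizer $\hat Q$ in the open cone $\mathfrak{P}_+(N)$, and its first-order conditions $\partial J_P/\partial q_k=0$ are exactly
\begin{displaymath}
c_k=\int_{-\pi}^\pi e^{ik\theta}\,\frac{P(e^{i\theta})}{\hat Q(e^{i\theta})}\,d\nu(\theta),\qquad k=0,1,\dots,n,
\end{displaymath}
i.e., the moment conditions \eqref{momentconditions} for $\Phi=P/\hat Q$; uniqueness of $Q$ follows from the strict convexity of $J_P$.
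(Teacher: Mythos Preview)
Your proposal is correct and follows essentially the same approach as the paper: the paper does not give a self-contained proof here but defers to \cite{LPcirculant} and immediately presents Theorem~\ref{optthm}, whose primal functional $\mathbb{I}_P$ and dual functional $\mathbb{J}_P$ are precisely the ones you introduce, with Theorem~\ref{mainthm} obtained from the stationarity conditions of the strictly convex dual. Your treatment of coercivity---splitting into the unbounded case (handled via the dual-cone condition $\cb\in\mathfrak{C}_+(N)$) and the boundary case (handled via the atomic nature of $\nu$)---and your injectivity remark ensuring strict convexity (using $n<N$) fill in exactly the details that the paper leaves to the cited reference.
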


Consequently the family of solutions \eqref{Phi=P/Q} of the covariance extension problem stated above are parameterized by $P\in\mathfrak{P}_+(N)$ in a bijective fashion. From the following theorem we see that, for any $P\in\mathfrak{P}_+(N)$, the corresponding unique $Q\in\mathfrak{P}_+(N)$ can be obtained by convex optimization. We refer the reader to \cite{LPcirculant} for the proofs. 

\begin{thm}\label{optthm}
Let $\cb\in\mathfrak{C}_+(N)$ and $P\in\mathfrak{P}_+(N)$. Then  the problem to maximize
\begin{equation}
\label{primal}
\mathbb{I}_P(\Phi) =\int_{-\pi}^\pi  P(e^{i\theta})\log \Phi(e^{i\theta})d\nu
\end{equation}
subject to the moment conditions \eqref{momentconditions} has a unique solution, namely \eqref{Phi=P/Q}, where $Q$ is the unique optimal solution of the problem to minimize \
\begin{equation}
\label{dual}
\mathbb{J}_P(Q)= \langle \cb,\qb\rangle -\int_{-\pi}^\pi  P(e^{i\theta})\log Q(e^{i\theta})d\nu
\end{equation}
over all $Q\in\mathfrak{P}_+(N)$, where $\qb:=(q_0,q_1,\dots,q_n)$.  The functional $\mathbb{J}_P$ is strictly convex. 
\end{thm}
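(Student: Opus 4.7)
The plan is to derive \eqref{dual} as the Lagrange dual of \eqref{primal}, verify strict convexity of $\mathbb{J}_P$ to secure uniqueness, and then establish existence of a minimizer by a coercivity-plus-barrier argument on the open cone $\mathfrak{P}_+(N)$. Theorem~\ref{mainthm} will then identify the resulting $\Phi=P/Q$ as the primal optimum.

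First, I would form the Lagrangian of \eqref{primal}. Introducing multipliers $q_0,\dots,q_n$ for the moment conditions \eqref{momentconditions} and assembling them (with $q_{-k}=\bar q_k$) into $Q(\zeta)=\sum_{k=-n}^n q_k\zeta^{-k}$, the Lagrangian reads
\[
L(\Phi,Q)=\int_{-\pi}^{\pi} P\log\Phi\,d\nu + \langle\cb,\qb\rangle - \int_{-\pi}^{\pi} Q\Phi\,d\nu.
\]
Pointwise unconstrained maximization in $\Phi(\zeta_j)>0$ at each grid point yields the stationarity condition $\Phi=P/Q$, and substituting back (dropping the resulting additive constant) produces exactly $\mathbb{J}_P(Q)$ in \eqref{dual}. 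This immediately gives weak duality $\mathbb{I}_P(\Phi)\le\mathbb{J}_P(Q)$ for all feasible pairs.

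Next I would verify strict convexity of $\mathbb{J}_P$. The linear term $\langle\cb,\qb\rangle$ is affine, so it suffices to show that $Q\mapsto -\int P\log Q\,d\nu = -\tfrac{1}{2N}\sum_j P(\zeta_j)\log Q(\zeta_j)$ is strictly convex on $\mathfrak{P}_+(N)$. Since each $P(\zeta_j)/(2N)>0$ and $-\log$ is strictly convex on $\Rbb_+$, it remains to argue that the evaluation map $\qb\mapsto(Q(\zeta_{-N+1}),\dots,Q(\zeta_N))$ is injective on the coefficient space; this holds because any nonzero pseudo-polynomial of degree $n$ has at most $2n$ zeros and $2N>2n$.

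The main obstacle is existence of a minimizer in the \emph{open} cone $\mathfrak{P}_+(N)$. For this I would show that every sublevel set $\{Q:\mathbb{J}_P(Q)\le m\}$ is relatively compact inside $\mathfrak{P}_+(N)$. Coercivity at infinity: if $\|Q_k\|\to\infty$, write $Q_k=r_k\tilde Q_k$ with $\|\tilde Q_k\|=1$. Because $\cb\in\mathfrak{C}_+(N)$, the functional $\qb\mapsto\langle\cb,\qb\rangle$ is strictly positive on the compact set $\{\tilde Q\in\overline{\mathfrak{P}_+(N)}:\|\tilde Q\|=1\}$ and hence attains a positive minimum there, so $\langle\cb,\qb_k\rangle$ grows linearly in $r_k$, whereas $\int P\log Q_k\,d\nu$ grows only like $\log r_k$; thus $\mathbb{J}_P(Q_k)\to+\infty$. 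Barrier at the boundary: if $Q_k\to Q^\ast\in\partial\mathfrak{P}_+(N)$, then $Q^\ast(\zeta_j)=0$ for some $\zeta_j\in\mathbb{T}_{2N}$, and the term $-P(\zeta_j)\log Q_k(\zeta_j)/(2N)\to+\infty$ using $P(\zeta_j)>0$, again forcing $\mathbb{J}_P(Q_k)\to+\infty$. Thus the minimum is attained in the interior, where the first-order condition $\partial\mathbb{J}_P/\partial q_k=0$ reads $c_k=\int e^{ik\theta}(P/Q)\,d\nu$ for $k=0,\dots,n$; strict convexity gives uniqueness of this $Q$, and Theorem~\ref{mainthm} then identifies $\Phi=P/Q$ as the unique primal optimizer.
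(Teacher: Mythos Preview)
The paper does not actually prove Theorem~\ref{optthm}; it simply states ``We refer the reader to \cite{LPcirculant} for the proofs,'' and notes that Theorems~\ref{mainthm} and~\ref{optthm} are discrete versions of corresponding results in \cite{BGuL,SIGEST}. Your Lagrangian-duality outline is precisely the standard argument used in that line of work, adapted to the finite measure $d\nu$, so there is no substantive divergence in approach to discuss.

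Your argument is correct, with one small mislabeling at the end. The first-order condition at the interior minimizer $\hat Q$ gives $\int e^{ik\theta}(P/\hat Q)\,d\nu=c_k$, so $\hat\Phi=P/\hat Q$ is primal feasible; combining this with the weak-duality inequality you already derived (keeping the additive constant $\int P\log P\,d\nu-\int P\,d\nu$) shows $\mathbb{I}_P(\hat\Phi)=\mathbb{J}_P(\hat Q)+\text{const}$, hence $\hat\Phi$ is primal optimal, and strict concavity of $\Phi\mapsto P\log\Phi$ at each grid point gives uniqueness. That is the correct closing step; Theorem~\ref{mainthm} does not itself assert primal optimality, only uniqueness of the moment-matching $Q$, so your appeal to it at the very end should be replaced by this direct saddle-point argument, which you have already set up.
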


Theorems \ref{mainthm} and \ref{optthm} are discrete versions of corresponding results in \cite{BGuL,SIGEST}.
The solution corresponding to $P=1$ is called the {\em maximum-entropy solution\/} by virtue of \eqref{primal}. 

\begin{rem}\label{rem:Phiinfty}
As $N\to\infty$ the process $y$ looses it periodic character, and its spectral density $\Phi_\infty$ becomes continuous and defined on the whole unit circle so that
\begin{equation}
\label{continuousmoments}
\int_{-\pi}^\pi e^{ik\theta}\Phi_\infty(e^{i\theta})\frac{d\theta}{2\pi} =c_k, \quad k=0,1,\dots,n.
\end{equation}
 In fact, denoting by $Q_N$ the solution of Theorem~\ref{mainthm}, it was shown in \cite{LPcirculant} that  $\Phi_\infty=P/Q_\infty$, where, for each fixed $P$,
\begin{displaymath}
Q_\infty=\lim_{N\to\infty}Q_N
\end{displaymath}
is the unique $Q$ such that $\Phi_\infty=P/Q$ satisfies the moment conditions \eqref{continuousmoments}.
\end{rem}

\section{Reformulation in terms of circulant matrices}\label{circulantsec}

Circulant matrices are Toeplitz matrices with a special circulant structure
\begin{equation}
\label{ }
\Circ\{ \gamma_0,\gamma_1,\dots, \gamma_\nu\} =
\begin{bmatrix}\gamma_0&\gamma_\nu&\gamma_{\nu-1}&\cdots&\gamma_1\\
			\gamma_1&\gamma_0&\gamma_\nu&\cdots&\gamma_2\\
			\gamma_2&\gamma_1&\gamma_0&\cdots&\gamma_3\\
			\vdots&\vdots&\vdots&\ddots&\vdots\\
			\gamma_\nu&\gamma_{\nu-1}&\gamma_{\nu-2}&\cdots&\gamma_0
\end{bmatrix},
\end{equation}
where the columns (or, equivalently, rows) are shifted cyclically, and where  $\gamma_0,\gamma_1,\dots,\gamma_\nu$ here are taken to be  complex numbers.  In our present covariance extension problem we consider {\em Hermitian\/} circulant  matrices 
\begin{equation}
\label{M_C}
\Mb:=\Circ\{ m_0,m_1,m_2,\dots, m_N,\bar{m}_{N-1},\dots,\bar{m}_2,\bar{m}_1\},
\end{equation}
which can be represented in form
\begin{equation}
\label{S2C}
\Mb =\sum_{k=-N+1}^N m_k\Sb^{-k}, \quad m_{-k}=\bar{m}_k
\end{equation}
where $\Sb$ is the nonsingular $2N\times 2N$  cyclic shift matrix
\begin{equation}
\label{Sb}
\Sb := \left[\begin{array}{cccccc}0 & 1 & 0 & 0 & \dots & 0 \\0 & 0 & 1 & 0 & \dots & 0 \\0 & 0 & 0 & 1 & \dots & 0 \\\vdots & \vdots & \vdots & \ddots & \ddots & \vdots \\0 & 0 & 0 & 0 & 0 & 1 \\1 & 0 & 0 & 0 & 0 & 0\end{array}\right].
\end{equation}
The pseudo-polynomial
\begin{equation}
\label{symbol}
M(\zeta)=\sum_{k=-N+1}^N m_k \zeta^{-k}, \quad m_{-k}=\bar{m}_k
\end{equation}
is called the {\em symbol\/} of $\Mb$. Clearly $\Sb$ is itself a circulant matrix (although not Hermitian) with symbol $S(\zeta)= \zeta$. A necessary and sufficient condition for a matrix $\Mb$ to be circulant is that 
\begin{equation}
\label{ }
\Sb\Mb\Sb\Tr=\Mb.
\end{equation}
Hence, since $\Sb^{-1}=\Sb\Tr$, the inverse of a circulant matrix is also circulant.  More generally, if $\Ab$ and $\Bb$ are circulant matrices of the same dimension with symbols $A(\zeta)$ and $B(\zeta)$ respectively, then $\Ab\Bb$ and $\Ab+\Bb$ are circulant matrices with symbols $A(\zeta)B(\zeta)$ and $A(\zeta)+B(\zeta)$, respectively. In fact, the circulant matrices of a fixed dimension form an algebra -- more precisely, a commutative *-algebra with the involution * being the conjugate transpose -- and the DFT is an {\em algebra   homomorphism} of the set of  circulant matrices  onto the   pseudo-polynomials of degree at most $N$ in the variable $\zeta \in \mathbb{T}_{2N}$. Consequently, circulant matrices commute, and, if $\Mb$ is a circulant matrix with symbol $M(\zeta)$ then $\Mb^{-1}$ is circulant with symbol $M(\zeta)^{-1}$.

The proof of the following proposition is immediate.

\begin{prop}\label{prop:Sigma}
Let $\{y(t); \, t=-N+1,\dots,N\}$ be a stationary process with period $2N$ and covariance lags \eqref{F2c}, and let $\yb$ be the $2N$-dimensional stochastic vector $\yb=[y(-N+1),y(-N+2), \cdots, y(N)]\Tr$. Then, with  $^*$ denoting conjugate transpose,
\begin{equation}
\label{Sigma}
\Sigmab :=\E\{\yb\yb^*\} =\Circ\{ c_0,c_1,c_2,\dots, c_N,\bar{c}_{N-1},\dots,\bar{c}_2,\bar{c}_1\}
\end{equation}  
is a $2N\times 2N$ Hermitian circulant matrix with symbol $\Phi(\zeta)$ given by \eqref{Phi}.
\end{prop}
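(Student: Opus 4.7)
The plan is to compute the entries of $\Sigmab$ directly from stationarity, and then use periodicity to pin down the circulant structure and its symbol.

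First I would note that by stationarity of $\{y(t)\}$, the $(i,j)$ entry of $\Sigmab$ (with rows and columns indexed by $t=-N+1,\dots,N$) equals $c_{i-j}$, so that $\Sigmab$ is automatically Hermitian Toeplitz with first column $(c_0,c_1,\dots,c_{2N-1})^{\Tr}$ (after re-indexing). The nontrivial step is then to invoke periodicity: from $y(t+2N)=y(t)$ almost surely we get
\begin{equation*}
c_{k+2N}=\E\{y(t+k+2N)\overline{y(t)}\}=\E\{y(t+k)\overline{y(t)}\}=c_k
\end{equation*}
for every $k\in\Zbb$. Combining this with the Hermitian symmetry $c_{-k}=\bar c_k$ of the autocovariance gives
\begin{equation*}
c_{2N-k}=c_{-k}=\bar c_k,\qquad k=1,\dots,N-1,
\end{equation*}
which is precisely the relation that turns the Toeplitz entries below the ``wrap-around'' diagonal into the complex conjugates of those above it.

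Next I would rewrite the first column of $\Sigmab$ as
$(c_0,c_1,\dots,c_N,\bar c_{N-1},\dots,\bar c_1)^{\Tr}$
using the identities just derived, and verify that shifting this column cyclically reproduces the remaining columns of $\Sigmab$; that is, $\Sb\Sigmab\Sb\Tr=\Sigmab$. This is a direct check, once the periodicity relations are in hand. Thus $\Sigmab$ is the Hermitian circulant matrix displayed in \eqref{Sigma}.

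Finally, to identify the symbol, I would set $m_k=c_k$ for $k=0,1,\dots,N$ and extend by $m_{-k}=\bar m_k$. Consistency with $c_{-k}=\bar c_k$ is immediate, and substitution into \eqref{symbol} yields
\begin{equation*}
M(\zeta)=\sum_{k=-N+1}^N m_k\zeta^{-k}=\sum_{k=-N+1}^N c_k\zeta^{-k}=\Phi(\zeta),
\end{equation*}
matching \eqref{Phi}. The whole argument is really bookkeeping; the one step with genuine content is the passage from the periodicity of $\{y(t)\}$ to the identity $c_{2N-k}=\bar c_k$, and this is also where indexing conventions must be handled carefully, so that will be the main (minor) obstacle.
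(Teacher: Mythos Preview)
Your argument is correct and is precisely the ``immediate'' verification the paper has in mind: the paper does not spell out a proof at all but simply notes that the proposition is immediate, and your computation of the Toeplitz entries from stationarity, together with the periodicity identity $c_{2N-k}=\bar c_k$ forcing the circulant structure, is exactly the direct check that justifies that claim. There is nothing to add.
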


The covariance extension problem of Section~\ref{covextsec}, called the {\em circulant rational covariance extension problem}, can now be reformulated as a matrix extension problem. The given covariance data $\cb=(c_0,c_1,\dots,c_n)$ can be represented as a circulant matrix
\begin{equation}
\label{bandedC}
\Cb = \Circ\{ c_0,c_1,\dots,c_n,0,\dots,0,\bar{c}_n,\bar{c}_{n-1},\dots,\bar{c}_1\}
\end{equation}
with symbol
\begin{equation}
\label{C(z)}
C(\zeta)=\sum_{k=-n}^n c_k \zeta^{-k}, 
\end{equation}
where the unknown covariance lags  $c_{n+1},c_{n+2},\dots,c_N$ in \eqref{Sigma}, to be determined, here are replaced by  zeros. A circulant matrix of type \eqref{bandedC} is called {\em banded of order\/} $n$. We recall that $n<N$. From now one we drop the attribute `Hermitian'  since we shall only consider such circulant matrices in the sequel. A banded circulant matrix of order $n$ will thus be determined by $n+1$ (complex) parameters.

The next lemma establishes the connection between circulant matrices and their symbols. 

\begin{lem}\label{diagonalizationlem}
Let $\Mb$ be a circulant matrix with symbol $M(\zeta)$. Then 
\begin{equation}
\label{Mdiag}
\Mb=\Fb^*\text{\rm diag}\big(M(\zeta_{-N+1}),M(\zeta_{-N+2}),\dots,M(\zeta_N)\big)\Fb,
\end{equation}
where $\Fb$ is the unitary matrix
\begin{equation}
\label{F}
\Fb= \frac{1}{\sqrt{2N}}\left[\begin{array}{cccc}\zeta_{-N+1}^{N-1} & \zeta_{-N+1}^{N-2} & \cdots & \zeta_{-N+1}^{-N} \\ \vdots & \vdots & \cdots & \vdots \\\zeta_{0}^{N-1} & \zeta_{0}^{N-2} & \cdots & \zeta_{0}^{-N}  \\ \vdots & \vdots & \cdots & \vdots \\\zeta_{N}^{N-1} & \zeta_{N}^{N-2} & \cdots & \zeta_{N}^{-N} \end{array}\right] .
\end{equation}
Moreover, if $M(\zeta_k)>0$ for all $k$, then
\begin{equation}
\label{logM}
\log\Mb=\Fb^*\text{\rm diag}\big(\log M(\zeta_{-N+1}),\log M(\zeta_{-N+2}),\dots,\log M(\zeta_N)\big)\Fb.
\end{equation}
\end{lem}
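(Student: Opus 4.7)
The plan is to reduce everything to diagonalizing the single circulant matrix $\Sb$ and then invoke the representation $\Mb=\sum_{k=-N+1}^N m_k\Sb^{-k}$ from \eqref{S2C}, together with the algebra-homomorphism property already noted between circulants and their symbols.

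First I would verify directly that the column of $\Fb^*$ indexed by $k$, namely the vector proportional to $\xb_k=(\zeta_k^{1-N},\zeta_k^{2-N},\dots,\zeta_k^{N})^{\Tr}$, is an eigenvector of $\Sb$ with eigenvalue $\zeta_k$. Applying the cyclic shift $\Sb$ from \eqref{Sb} to $\xb_k$ moves the entries up one step, and using $\zeta_k^{2N}=1$ the wrapped last entry matches, so $\Sb\xb_k=\zeta_k\xb_k$. Next I would verify that $\Fb$ is unitary, which amounts to the orthogonality relation
\begin{equation*}
\frac{1}{2N}\sum_{j=-N+1}^N\zeta_k^{j}\zeta_\ell^{-j}=\delta_{k\ell};
\end{equation*}
setting $\omega=\zeta_{k-\ell}$ the sum becomes $\frac{1}{2N}\sum_j\omega^j$, which vanishes for $k\ne\ell$ by exactly the geometric-sum collapse used in the proof of \eqref{delta}, since $|k-\ell|<2N$ forces $\omega\ne 1$ unless $k=\ell$.

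Combining these two facts gives $\Sb=\Fb^*\,\text{\rm diag}(\zeta_{-N+1},\dots,\zeta_N)\,\Fb$, and hence $\Sb^{-k}=\Fb^*\,\text{\rm diag}(\zeta_{-N+1}^{-k},\dots,\zeta_N^{-k})\,\Fb$ by unitarity of $\Fb$. Substituting into \eqref{S2C} and using linearity, the $j$-th diagonal entry of the resulting diagonal factor becomes $\sum_{k=-N+1}^N m_k\zeta_j^{-k}=M(\zeta_j)$, which is exactly \eqref{Mdiag}. For the logarithmic identity, the hypothesis $M(\zeta_k)>0$ together with \eqref{Mdiag} shows that $\Mb$ is Hermitian positive definite with spectrum $\{M(\zeta_k)\}$; since $\Fb$ is unitary and the matrix logarithm is defined by functional calculus, it commutes with the similarity $\Fb^*(\cdot)\Fb$ and acts entrywise on the diagonal, yielding \eqref{logM}.

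The only mildly delicate step is keeping the indexing in \eqref{F} and \eqref{Sb} mutually consistent, so that the eigenvalue produced by the shift is $\zeta_k$ rather than $\zeta_k^{-1}$ and so that the diagonal entries appear in the order displayed in \eqref{Mdiag}. I would double-check the bookkeeping on a small case such as $N=2$ before writing out the general formula. Beyond that, both statements are routine consequences of the DFT diagonalization.
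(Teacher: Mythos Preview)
Your argument is correct, and it takes a somewhat different route from the paper's own proof. The paper establishes \eqref{Mdiag} by a convolution-theorem computation: it shows directly that for any vector $\gb$ one has $\script F(\Mb\gb)=M(\zeta)\,\script F\gb$ by expanding $\Mb\gb=\sum_k m_k\Sb^{-k}\gb$ as a cyclic convolution, changing the order of summation, and recognising the product of the two transforms; this gives $\Fb\Mb=\text{diag}(M(\zeta_j))\,\Fb$ in one stroke without ever isolating $\Sb$. You instead work bottom-up: diagonalise the single generator $\Sb$ by exhibiting its eigenvectors explicitly, verify unitarity of $\Fb$ via the geometric-sum identity, and then let the polynomial representation \eqref{S2C} carry the diagonalisation to every circulant. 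Your approach is slightly more modular and makes the algebra-homomorphism statement in the paper (that circulants form a commutative $*$-algebra with the DFT as an isomorphism onto diagonal matrices) completely transparent, since everything is simultaneously diagonalised once $\Sb$ is; the paper's approach is marginally shorter and ties the lemma more directly to the Fourier-analytic viewpoint used elsewhere in the section. For \eqref{logM} the two arguments coincide: both invoke functional calculus on a unitarily diagonalised positive-definite matrix, the paper phrasing this as the spectral mapping theorem.
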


\begin{proof}
The discrete Fourier transform $\script F$ maps a sequence   $(g_{-N+1},g_{-N+2},\dots,g_N)$  into the  sequence  of complex numbers 
\begin{equation} \label{DFT}
G(\zeta_{j}) := \sum_{k=-N+1 }^N g_k \zeta_{j}^{-k} \,,\qquad j=-N+1,-N+2, \ldots , N .
\end{equation}
The sequence $\gb$ can be recovered from  $G$  by the inverse transform
\begin{equation} \label{InvDFTmeas}
g_k = \int_{-\pi}^\pi e^{ik\theta}G(e^{i\theta})  d\nu(\theta),\quad k =  -N+1,-N+2, \dots ,N.
\end{equation}
This correspondence can be written 
\begin{equation}
\label{ghat=Fg}
\hat{\gb} =\Fb\gb,
\end{equation}
where $\hat{\gb}:=(2N)^{-\frac12}\big(G(\zeta_{-N+1}),G(\zeta_{-N+2}),\dots,G(\zeta_N)\big)\Tr$, $\gb:=(g_{-N+1},g_{-N+2},\dots,g_N)\Tr$, and $\Fb$ is the nonsingular $2N\times 2N$ Vandermonde matrix \eqref{F}. Clearly $\Fb$ is unitary. Since 
\begin{displaymath}
\Mb\gb= \sum_{k=-N+1 }^N m_k\Sb^{-k}
\end{displaymath}
and $[\Sb^{-k}\gb]_j =g_{j-k}$, where $g_{k+2N}=g_k$, we have 
\begin{displaymath}
\begin{split}
\script F(\Mb\gb)&=\sum_{j=-N+1}^N \zeta^{-j}\sum_{k=-N+1}^N m_kg_{j-k}\\
			&=\sum_{k=-N+1}^N m_k \zeta^{-k}\sum_{j=-N+1}^N g_{j-k}\zeta^{-(j-k)}
			= M(\zeta)\script F\gb,
\end{split}
\end{displaymath}
which yields 
\begin{displaymath}
\sqrt{2N}(\Fb\Mb\gb)_j=M(\zeta_j)\sqrt{2N}(\Fb\gb)_j, \quad j=-N+1,-N+2,\dots,N,
\end{displaymath}
from which \eqref{Mdiag} follows. Finally, since $\log M(\zeta)$ is analytic in the neighborhood of each $M(\zeta_k)>0$,  the eigenvalues of $\log\Mb$ are just the real numbers $\log M(\zeta_k)$, $k=-N+1,\dots,N$, by the spectral mapping theorem \cite[p. 557]{DS}, and hence \eqref{logM} follows. 
\end{proof}

We are now in a position to reformulate Theorems \ref{mainthm} and \ref{optthm} in terms of circulant matrices. To this end first note that, in view of Lemma~\ref{diagonalizationlem},  the cone $\mathfrak{P}_+(N)$ corresponds to the class of positive-definite banded $2N\times 2N$ circulant matrices $\Pb$ of order $n$. Moreover, by Plancherel's Theorem for DFT, which is a simple consequence of \eqref{delta}, we have 
\begin{displaymath}
\sum_{k=-n}^n c_k\bar{p}_k =\frac{1}{2N}\sum_{j=-N+1}^N C(\zeta_j)P(\zeta_{j}),
\end{displaymath}
and hence, by Lemma~\ref{diagonalizationlem}, 
\begin{equation}
\label{ }
\langle \cb,\pb\rangle  = \frac{1}{2N} \trace(\Cb\Pb).
\end{equation}
Consequently, $\cb\in\mathfrak{C}_+(N)$ if and only if $\trace(\Cb\Pb)>0$ for all nonzero, positive-semidefinite, 
banded $2N\times 2N$ circulant matrices $\Pb$ of order $n$. Moreover, if $\Qb$ and $\Pb$ are circulant matrices with symbols $P(\zeta)$ and $Q(\zeta)$, respectively, then, by Lemma~\ref{diagonalizationlem}, $P(\zeta)/Q(\zeta)$ is the symbol of $\Qb^{-1}\Pb$. Therefore Theorem~\ref{mainthm} has the following matrix version. 

\begin{thm}\label{mainthm_matrix}
Let $\cb\in\mathfrak{C}_+(N)$, and let $\Cb$ be the corresponding circulant matrix \eqref{bandedC}. Then, for each  
positive-definite banded $2N\times 2N$ circulant matrices $\Pb$ of order $n$, there is unique positive-definite banded $2N\times 2N$ circulant matrices $\Qb$ of order $n$ such that 
\begin{equation}
\label{Sigmab=QbinvPb}
\Sigmab=\Qb^{-1}\Pb
\end{equation}
is a circulant extension \eqref{Sigma} of $\Cb$.
\end{thm}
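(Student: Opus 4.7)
The plan is to transport Theorem~\ref{mainthm} through the algebra homomorphism between Hermitian circulant matrices and their symbols established in Lemma~\ref{diagonalizationlem}. That correspondence preserves positivity (positive definiteness of the matrix is equivalent to positivity of the symbol on $\mathbb{T}_{2N}$), bandedness of order $n$ (equivalent to the symbol being a pseudo-polynomial of degree at most $n$ in the sense of \eqref{P}), products and inverses; so the matrix-valued statement should reduce almost verbatim to the scalar one.

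Concretely, I would proceed as follows. Let $P(\zeta)$ denote the symbol of the given $\Pb$. Because $\Pb$ is banded of order $n$, $P$ has the form \eqref{P}; because $\Pb$ is positive definite, Lemma~\ref{diagonalizationlem} gives $P(\zeta_k)>0$ for every $k\in\{-N+1,\dots,N\}$, i.e., $P\in\mathfrak{P}_+(N)$. Theorem~\ref{mainthm} then furnishes a unique $Q\in\mathfrak{P}_+(N)$ such that $\Phi=P/Q$ satisfies the moment conditions \eqref{momentconditions}. I would define $\Qb$ to be the Hermitian circulant matrix having $Q$ as its symbol; since $Q$ is a pseudo-polynomial of degree at most $n$, $\Qb$ is automatically banded of order $n$, and Lemma~\ref{diagonalizationlem} again shows that $\Qb$ is positive definite.

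It remains to check that $\Sigmab:=\Qb^{-1}\Pb$ is a circulant extension of $\Cb$ in the sense of \eqref{Sigma}. By the algebra-homomorphism property, $\Sigmab$ is Hermitian circulant with symbol $P(\zeta)/Q(\zeta)=\Phi(\zeta)$, and by Proposition~\ref{prop:Sigma} such a matrix is an extension of $\Cb$ if and only if its first $n+1$ Fourier coefficients coincide with $c_0,\dots,c_n$; the inverse DFT formula \eqref{discretemoments} shows that this is precisely the content of \eqref{momentconditions}. Uniqueness is inherited from Theorem~\ref{mainthm}: any other admissible $\Qb'$ would have a symbol $Q'\in\mathfrak{P}_+(N)$ for which $P/Q'$ satisfies \eqref{momentconditions}, forcing $Q'=Q$ and hence $\Qb'=\Qb$. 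I do not anticipate a serious obstacle; the only care required is the bookkeeping between ``banded of order $n$'' on the matrix side and ``pseudo-polynomial of degree at most $n$'' on the symbol side, which is transparent from the definitions.
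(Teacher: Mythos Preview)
Your proposal is correct and matches the paper's approach exactly: the paper simply observes, after establishing via Lemma~\ref{diagonalizationlem} the correspondence between positive-definite banded circulants of order~$n$ and elements of $\mathfrak{P}_+(N)$ (and that $P/Q$ is the symbol of $\Qb^{-1}\Pb$), that Theorem~\ref{mainthm} translates directly into the matrix statement. You have spelled out the details of that translation faithfully.
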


In the same way, Theorem~\ref{optthm} has the following matrix version, as can be seen by applying Lemma~\ref{diagonalizationlem}.

\begin{thm}\label{optthm_matrix}
Let $\cb\in\mathfrak{C}_+(N)$, and let $\Cb$ be the corresponding circulant matrix \eqref{bandedC}. Moreover, let $\Pb$ be a positive-definite banded $2N\times 2N$ circulant matrix of order $n$. Then  the problem to maximize
\begin{equation}
\label{primal_matrix}
\mathcal{I}_{\Pb}(\Sigmab) = \text{\rm trace}(\Pb\log\Sigmab)
\end{equation}
subject to 
\begin{equation}
\label{momentcondmatrixb}
\Eb_n\Tr\Sigmab \Eb_n =\Tb_n, \quad \text{where\;} \Eb_n =\begin{bmatrix}\Ib_n\\{\bold 0}\end{bmatrix}
\end{equation}
has a unique solution, namely \eqref{Sigmab=QbinvPb}, where $\Qb$ is the unique optimal solution of the problem to minimize
\begin{equation}
\label{dual_matrix}
\mathcal{J}_{\Pb}(\qb)= \text{\rm trace}(\Cb\Qb) - \text{\rm trace}(\Pb\log\Qb)
\end{equation}
over all positive-definite banded $2N\times 2N$ circulant matrices $\Qb$ of order $n$, where $\qb:=(q_0,q_1,\dots,q_n)$.  The functional $\mathcal{J}_{\Pb}$ is strictly convex. 
\end{thm}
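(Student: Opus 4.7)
The plan is to reduce Theorem~\ref{optthm_matrix} to its pseudo-polynomial counterpart Theorem~\ref{optthm} via the spectral calculus of Lemma~\ref{diagonalizationlem}. The key observation is that all matrices involved ($\Sigmab$, $\Pb$, $\Qb$, $\Cb$) are Hermitian circulants and are therefore simultaneously diagonalized by $\Fb$, with diagonal entries given by evaluation of their symbols at the nodes $\zeta_j$. First I would translate each functional into symbol form. Applying \eqref{Mdiag} and \eqref{logM},
\[
\text{\rm trace}(\Pb\log\Sigmab)=\sum_{j=-N+1}^N P(\zeta_j)\log\Phi(\zeta_j) = 2N\int_{-\pi}^\pi P(e^{i\theta})\log\Phi(e^{i\theta})\,d\nu = 2N\,\mathbb{I}_P(\Phi),
\]
and analogously $\text{\rm trace}(\Pb\log\Qb)=2N\int P\log Q\,d\nu$. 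For the linear term, the Plancherel identity already noted before Theorem~\ref{mainthm_matrix} gives $\text{\rm trace}(\Cb\Qb)=2N\langle\cb,\qb\rangle$. Thus, up to the common factor $2N$,
\[
\mathcal{I}_{\Pb}(\Sigmab)=\mathbb{I}_P(\Phi),\qquad \mathcal{J}_{\Pb}(\qb)=\mathbb{J}_P(Q).
\]

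Next I would translate the constraint. Since $n<N$, the upper-left $(n+1)\times(n+1)$ block $\Eb_n\Tr\Sigmab\,\Eb_n$ of the circulant $\Sigmab$ in \eqref{Sigma} contains only the lags $c_0,\dots,c_n$ and coincides with $\Tb_n$; equivalently, by \eqref{discretemoments} with $\Phi$ the symbol of $\Sigmab$, this block condition is exactly the moment system \eqref{momentconditions}. The symbol map also sets up the required bijections: positive-definite circulant extensions $\Sigmab$ of $\Cb$ correspond bijectively to symbols $\Phi$ with $\Phi(\zeta_j)>0$, and positive-definite banded circulants of order $n$ correspond bijectively to elements of $\mathfrak{P}_+(N)$ via the pseudo-polynomial \eqref{P}. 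By the algebra-homomorphism property of the symbol map (noted after \eqref{symbol}), the symbol of $\Qb^{-1}\Pb$ is $P/Q$, so \eqref{Sigmab=QbinvPb} is precisely the circulant incarnation of \eqref{Phi=P/Q}. At this stage, Theorem~\ref{optthm} transfers verbatim to yield existence, uniqueness and the primal--dual structure of the optimizers in the matrix formulation.

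The one point that still requires care, and which I expect to be the main obstacle, is the strict convexity of $\mathcal{J}_{\Pb}$ viewed as a function of the $n+1$ parameters $\qb=(q_0,\ldots,q_n)$. One route is simply to inherit it from the strict convexity of $\mathbb{J}_P$ asserted in Theorem~\ref{optthm}, since the translation above is an affine bijection in the parameters. A more self-contained route is as follows: the map $\qb\mapsto\Qb$ is linear and, because $n<N$, injective (the Vandermonde evaluation of a pseudo-polynomial of degree $\le n$ at the $2N$ distinct nodes $\zeta_j$ is injective). On the open cone of positive-definite circulants, the functional $\Qb\mapsto -\text{\rm trace}(\Pb\log\Qb)$ has second variation
\[
\text{\rm trace}\bigl(\Pb\,\Qb^{-1}\,\delta\Qb\,\Qb^{-1}\,\delta\Qb\,\Qb^{-1}\bigr),
\]
which, since $\Pb$ and $\Qb^{-1}$ commute and are positive definite, is strictly positive for every nonzero Hermitian circulant perturbation $\delta\Qb$. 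The linear term $\text{\rm trace}(\Cb\Qb)=2N\langle\cb,\qb\rangle$ is affine and hence preserves strict convexity under composition with the injective linear map $\qb\mapsto\Qb$. This gives strict convexity of $\mathcal{J}_{\Pb}$ on the open convex set of admissible $\qb$ and, combined with the coercivity already implicit in Theorem~\ref{optthm}, delivers the unique minimizer $\Qb$ characterizing the solution \eqref{Sigmab=QbinvPb}.
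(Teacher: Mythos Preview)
Your proposal is correct and follows exactly the paper's approach, which simply states that Theorem~\ref{optthm_matrix} is obtained from Theorem~\ref{optthm} by applying Lemma~\ref{diagonalizationlem}; you have merely (and usefully) spelled out the translation in detail. One small slip: in your alternative second-variation computation the Hessian of $\Qb\mapsto -\text{\rm trace}(\Pb\log\Qb)$ in the direction $\delta\Qb$ is $\text{\rm trace}\bigl(\Pb\,\Qb^{-1}\delta\Qb\,\Qb^{-1}\delta\Qb\bigr)$, with only two factors of $\Qb^{-1}$, but this does not affect the strict-positivity conclusion, and your primary route---inheriting strict convexity from $\mathbb{J}_P$ through the affine bijection $\qb\mapsto\Qb$---already suffices.
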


\section{Bilateral ARMA models}\label{bilateralsec}

Suppose now that we have determined a circulant matrix extension \eqref{Sigmab=QbinvPb}. Then there is a stochastic vector $\yb$ formed from the a stationary periodic process with corresponding covariance lags \eqref{F2c} so that 
\begin{displaymath}
\Sigmab :=\E\{\yb\yb^*\} =\Circ\{ c_0,c_1,c_2,\dots, c_N,\bar{c}_{N-1},\dots,\bar{c}_2,\bar{c}_1\}.
\end{displaymath}
 Let $\hat{\E}\{y(t)\mid  y(s),\, s \neq t\}$ be the wide sense conditional mean of $y(t)$ given all $\{y(s),\, s \neq t\}$. Then the error process
\begin{equation}  \label{finconjn}
        d(t)     := y(t)-\hat{\E} \{ y(t)\mid y(s),\, s \neq t\}
  \end{equation}
is orthogonal to all random variables $\{ y(s),\, s \neq t\}$, i.e., $\E\{y(t)\,\overline{d(s)}\}= \sigma^2 \, \delta_{ts}$, $t, s \in\Zbb_{2N}:=\{-N+1,-N+2,\dots,N\}$, where $\sigma^2$ is a  positive number. Equivalently, $\E\{\yb\db^*\}=\sigma^2 \Ib$, where $\Ib$ is the $2N\times 2N$ identity matrix. Setting $\eb:=\db/\sigma^2$, we then have 
\begin{equation}
\label{eystar}
\E\{\eb\yb^*\}=\Ib,
\end{equation}
i.e., the corresponding process $e$ is the {\em conjugate process\/} of $y$ \cite{Masani-60}. Interpreting \eqref{finconjn} in the $\mod 2N$ arithmetics of $\Zbb_{2N}$,    $\yb$ admits a linear representation of the form 
\begin{equation}
\label{Ay=e}
\Gb\yb= \eb, 
\end{equation}
where  $\Gb$ is a $2N\times 2N$ Hermitian circulant matrix  with ones on the main diagonal. Since $\Gb\E\{\yb\yb^*\}=  \E\{\eb\yb^*\} =  \Ib$, $\Gb$ is also positive definite and the covariance matrix $\Sigmab$ is  given by
\begin{equation}
\label{cov}
\Sigmab   = \Gb ^{-1},
\end{equation}
which is circulant, since the inverse of a circulant matrix is itself circulant. In fact, a  stationary process $\yb$  is full-rank periodic in $\Zbb_{2N}$, if and only if $\Sigmab $ is a Hermitian  positive  definite circulant matrix \cite{Carli-FPP}. 

Since $\Gb$ is a Hermitian circulant matrix, it has a symbol
\begin{displaymath}
G(\zeta)=\sum_{k=-N+1}^N g_k\zeta^{-k}, \quad g_{-k}=\bar{g}_k,
\end{displaymath} 
and the linear equation can be written in the autoregressive (AR) form
\begin{equation}
\label{AR}
\sum_{k=-N+1}^N g_k y(t-k)=e(t).
\end{equation}
However,  in general  $\Gb$ is not banded  and $n<<N$, and therefore \eqref{AR} is not a useful representation. Instead using the solution  \eqref{Sigmab=QbinvPb}, we have $\Gb=\Pb^{-1}\Qb$, where $\Pb$ and $\Qb$ are banded of order $n$ with symbols
\begin{displaymath}
P(\zeta)=\sum_{k=-n}^n p_k\zeta^{-k}\quad \text{and}\quad  Q(\zeta)=\sum_{k=-n}^n q_k\zeta^{-k},
\end{displaymath}
and hence \eqref{Ay=e} can be written
\begin{displaymath}
\Qb\yb=\Pb\eb,
\end{displaymath}
or equivalently in the ARMA form
\begin{equation}
\label{ARMA}
\sum_{k=-n}^n q_k y(t-k) = \sum_{k=-n}^n p_k e(t-k).
\end{equation}

Consequently, by Theorem~\ref{mainthm_matrix}, there is a unique bilateral ARMA model \eqref{ARMA} for each banded positive-definite Hermitian circulant matrix $\Pb$ of order $n$, provided $\cb\in\mathfrak{C}_+$. Of course,we could use the maximum-entropy solution with $\Pb=\Ib$ leading to an AR model
\begin{equation}
\label{ME-AR}
\sum_{k=-n}^n q_k y(t-k) = e(t).
\end{equation}

Next, to illustrate the accuracy of bilateral AR modeling by the methods described so far we give some simulations from \cite{LPcirculant}, provided by Chiara Masiero. Given an AR model of order $n=8$ with poles as depicted in Figure \ref{figure1}, we compute a covariance sequence $\cb=(c_0,c_1,\dots,c_n)$ with $n=8$, which is then used to solve the  optimization problem \eqref{dual_matrix} with $\Pb=\Ib$ to obtain a bilateral AR approximations of degree eight for various choices of $N$. In Figure~\ref{figures2and3}, the left picture depicts the spectral density for $N=128$  together with the true spectral density (dashed line), and the right picture illustrates how the estimation error decreases with increasing $N$. 
\begin{figure}[h]
\centering
\includegraphics[height=6.5cm,width=6.5cm]{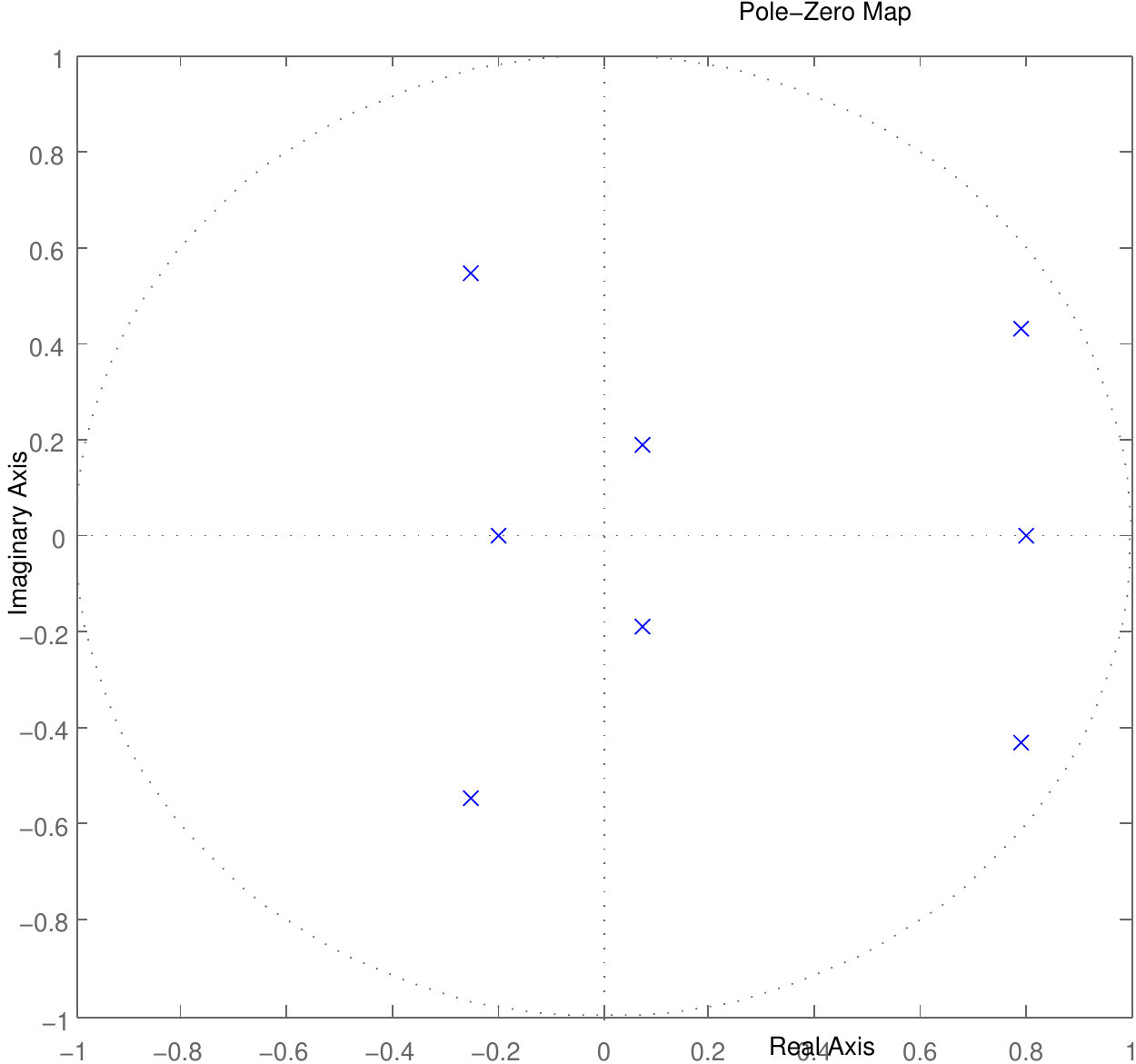}
\caption{\small Poles of true AR model.}
\label{figure1}
\end{figure}
\begin{figure}[bht]
\centering
\includegraphics[height=5cm]{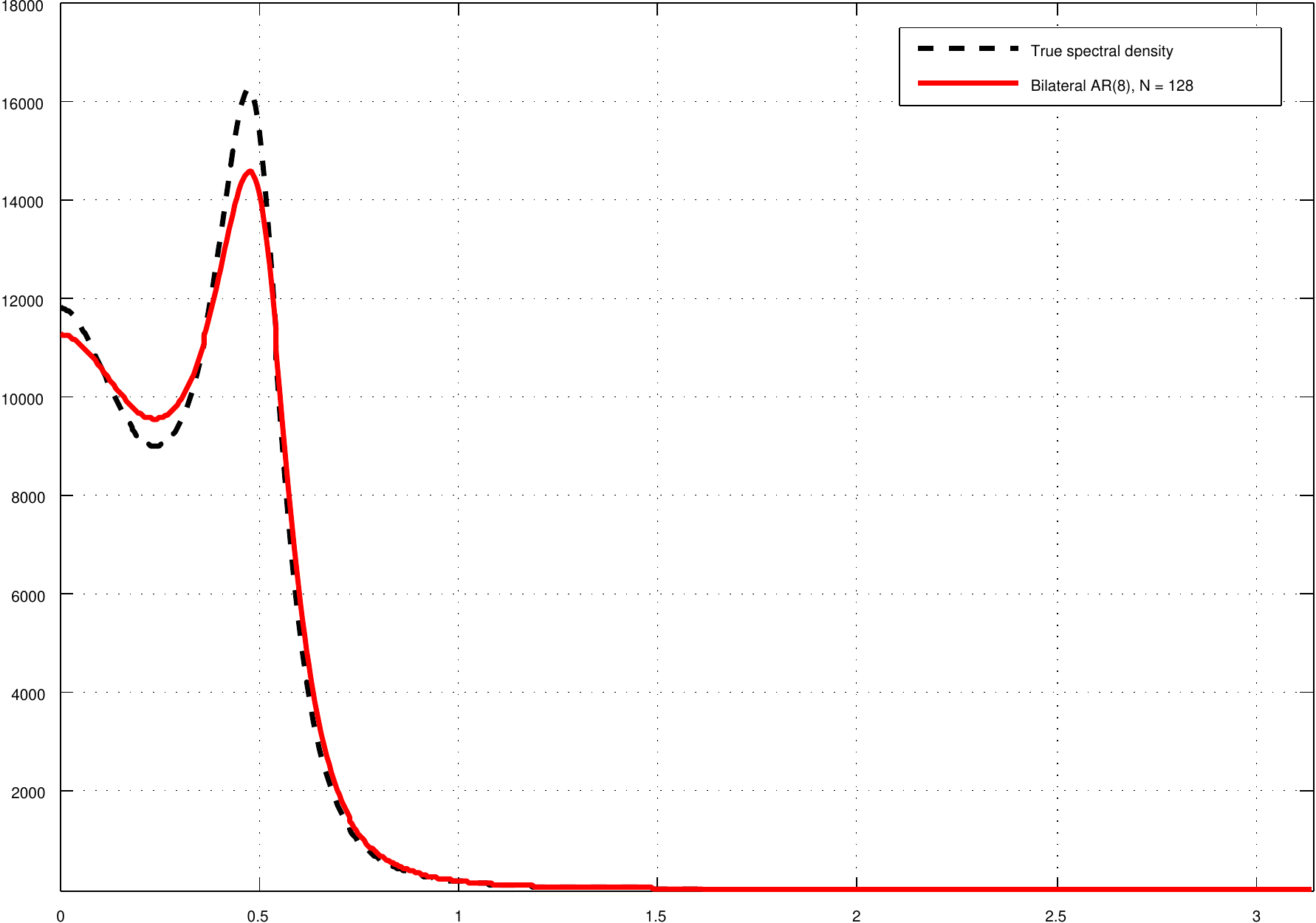}
\label{figure2}
\qquad
\includegraphics[height=5cm]{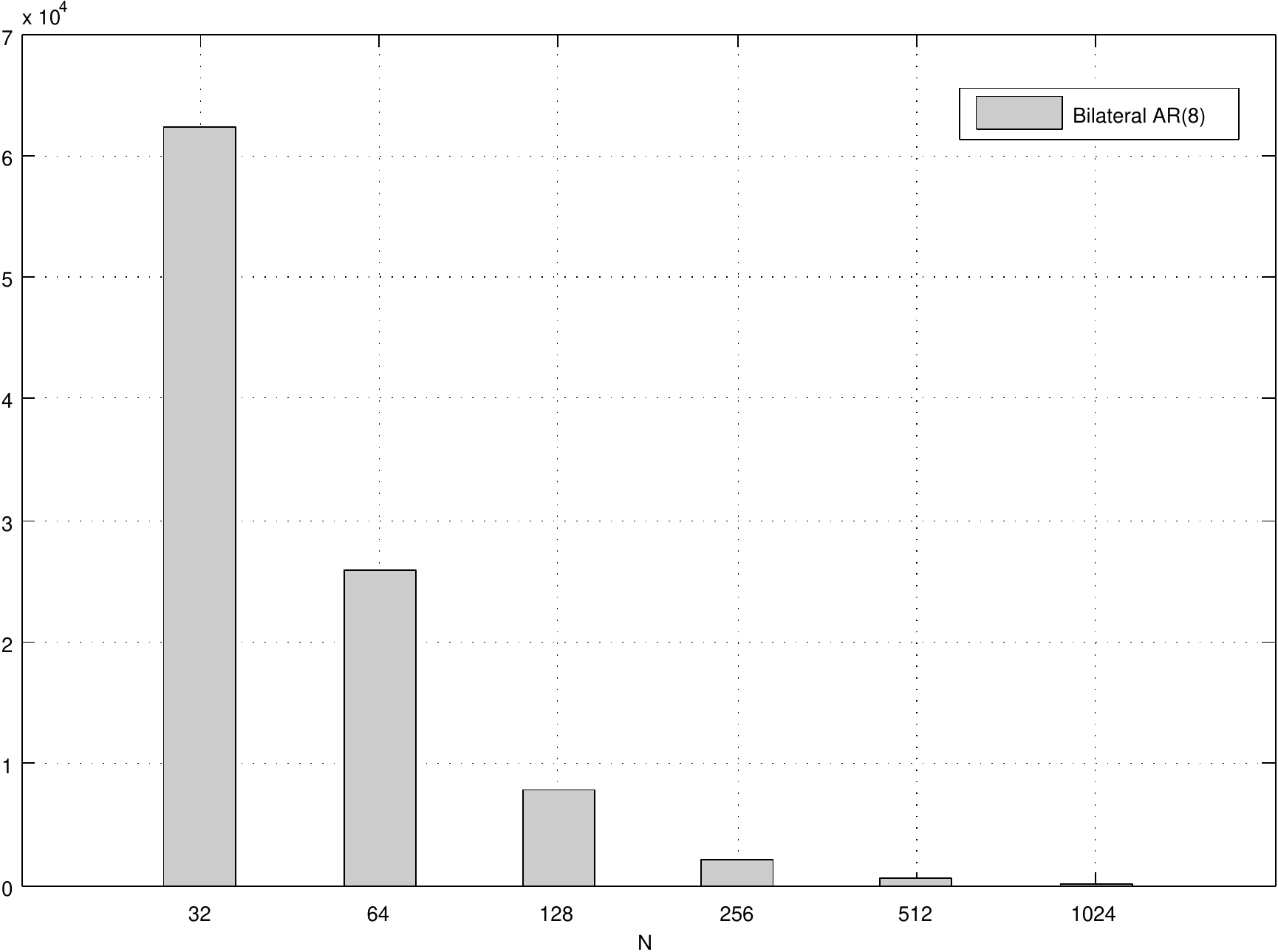}
\caption{\small Bilateral AR approximation: (left) spectrum for N = 128 and true spectrum (dashed); (right) errors for N=32, 64, 128, 256, 512 and 1024.}
\label{figures2and3}
\end{figure}

\section{Unilateral ARMA models and spectral factorization}\label{unilateralARMAsec}

As explained in Section~\ref{periodicsec}, a periodic process $y$ has a discrete spectrum, and Theorem~\ref{mainthm}  provides values of 
\begin{displaymath}
\Phi(z)=\frac{P(z)}{Q(z)}
\end{displaymath}
only in the discrete points $z\in\mathbb{T}_{2N}:=\{\zeta_{-N+1},\zeta_{-n+2},\dots,\zeta_N\}$. Since $\Phi$ takes positive values on $\mathbb{T}_{2N}$, there is a trivial discrete factorization
\begin{equation}
\label{discretefactor}
\Phi(\zeta_k)=W(\zeta_k)W(\zeta_k)^*\quad k=-N+1,\dots, N.
\end{equation}
Defining
\begin{displaymath}
W_k= \frac{1}{2N}\sum_{j=-N+1}^N W(\zeta_j)\zeta_j^k, \quad k=-N+1,\dots, N,
\end{displaymath}
we can write \eqref{discretefactor} in the form 
\begin{equation}
\label{Phi(zeta)}
\Phi(\zeta)=W(\zeta)W(\zeta)^*.
\end{equation}
where $W(\zeta)$ is the discrete Fourier transform
\begin{displaymath}
W(\zeta)=\sum_{k=-N+1}^N W_k\zeta^{-k}.
\end{displaymath}

Applying Lagrange interpolation to $W$, we obtain a spectral factorization equation 
\begin{equation}
\label{Phi(z)}
\tilde\Phi(z)=W(z)W(z)^*,\quad z\in\mathbb{T},
\end{equation}
defined on the whole unit circle, where $\tilde\Phi(\zeta)=\Phi(\zeta)$ on $\mathbb{T}_{2N}$. This is a spectral density of non-periodic stationary process but should not be confused with $\Phi_\infty$ in Remark~\ref{rem:Phiinfty}, which is the unique continuous $\Phi$ with numerator polynomial $P$ and the same covariance structure as the periodic process $y$, i.e.,
\begin{displaymath}
\int_{-\pi}^\pi e^{ik\theta}\Phi_\infty(e^{i\theta})\frac{d\theta}{2\pi} =c_k, \quad k=0,1,\dots,n.
\end{displaymath} 
In fact, although
\begin{equation}
\label{Phitilde2c}
\int_{-\pi}^\pi e^{ik\theta}\tilde\Phi(e^{i\theta})d\nu(\theta) =c_k, \quad k=0,1,\dots,n ,
\end{equation}
the non-periodic process with spectral density $\tilde\Phi$ has the covariance lags
\begin{displaymath}
\tilde{c}_k=\int_{-\pi}^\pi e^{ik\theta}\tilde\Phi(e^{i\theta})\frac{d\theta}{2\pi}, \quad k=0,1,\dots,n ,
\end{displaymath}
which differ from $c_0,c_1,\dots,c_n$. However, setting $\Delta\theta_j:=\theta_j-\theta_{j-1}$ where $e^{\theta_j}=\zeta_j$,   we see from \eqref{zetadefn} that $\Delta\theta_j=\pi/N$ and that the integral \eqref{Phitilde2c} with $\tilde\Phi$ fixed is the Riemann sum
\begin{displaymath}
\sum_{j=-N+1}^Ne^{ik\theta_j} \tilde\Phi(\zeta_j)\frac{\Delta\theta_j}{2\pi} 
\end{displaymath}
converging to $\tilde{c}_k$ for $k=0,1,\dots,n$ as $N\to\infty$. 

By Proposition~\ref{prop:Sigma}, $\Phi(\zeta)$ is the symbol of the circulant covariance matrix $\Sigmab$, and hence \eqref{Phi(zeta)} can be written in the matrix form
\begin{equation}
\label{matrixWW*}
\Sigmab=\Wb\Wb^*,
\end{equation}
where $\Wb$ is the circulant matrix with symbol $W(\zeta)$. The spectral-factorization \eqref{Phi(z)} has a unique outer spectral factor $W(z)$; see, e.g., \cite{LPbook}. As explained in detail in \cite{Carlietal}, this corresponds in the discrete setting to $W(\zeta)$ taking the form
\begin{equation}
\label{analyticW}
W(\zeta)=\sum_{k=0}^N W_k\zeta^{-k},
\end{equation}
which in turn corresponds to $\Wb$ being  {\em lower-triangular circulant}, i.e.,
\begin{equation}
\label{ }
\Wb=\Circ\{ W_0,W_1,\dots,W_N,0,\dots,0\}.
\end{equation}
Note that a lower-triangular circulant matrix is not lower triangular as the circulant structure has to be preserved. 
Since $\Sigmab$ is invertible, then so is $\Wb$.   

Next define the periodic stochastic process $\{w(t),\, t=-N+1\dots,N\}$ for which $\wb = [w(-N +1),w(-N +2),\dots,w(N)]\Tr$ is given by
\begin{equation}
\label{wdefn}
\wb=\Wb^{-1}\yb.
\end{equation}
Then, in view of \eqref{matrixWW*}, we obtain $\E\{\wb\wb^*\}=\Ib$, i.e., the process $w$ is a white noise process. Consequently we have the unilateral representation 
\begin{displaymath}
y(t)=\sum_{k=0}^N W_kw(t-k)
\end{displaymath}
in terms of white noise. 

To construct an ARMA model we appeal to the following result, which is easy to verify in terms of symbols but, as demonstrated in  \cite{Carlietal}, also holds for block circulant matrices considered in Section~\ref{multivariatesec}. 

\begin{lem}\label{lem:bandedfactor}
A positive definite, Hermitian, circulant matrix $\Mb$ admits a factorization $\Mb=\Vb\Vb^*$, where $\Vb$ is of a banded lower-diagonal circulant matrix of order $n<N$, if and only if $\Mb$ is bilaterally banded of order $n$.
\end{lem}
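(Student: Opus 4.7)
The plan is to translate the matrix identity $\Mb=\Vb\Vb^{*}$ into a statement about symbols via the $*$-algebra isomorphism in Lemma~\ref{diagonalizationlem}, so that the problem reduces to a Fej\'er--Riesz-type factorization of Hermitian Laurent polynomials of degree at most $n$.

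For the direction that the existence of a banded factor forces bilateral bandedness, if $\Vb=\Circ\{V_0,V_1,\dots,V_n,0,\dots,0\}$ has symbol $V(\zeta)=\sum_{k=0}^{n}V_k\zeta^{-k}$, then by Lemma~\ref{diagonalizationlem} the symbol of $\Vb^{*}$ is $\overline{V(\zeta)}=\sum_{k=0}^{n}\bar{V}_k\zeta^{k}$, so the symbol of $\Vb\Vb^{*}$ equals
\[
V(\zeta)\overline{V(\zeta)}=\sum_{j,k=0}^{n}V_j\bar{V}_k\,\zeta^{\,k-j},
\]
a Hermitian Laurent polynomial of degree at most $n$ (since $k-j\in\{-n,\dots,n\}$). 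Reading off the coefficients via the inverse DFT yields $m_\ell=0$ for $|\ell|>n$, so $\Mb=\Vb\Vb^{*}$ is bilaterally banded of order $n$.

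For the converse, assume $\Mb$ is positive definite and bilaterally banded of order $n$, with symbol $M(\zeta)=\sum_{k=-n}^{n}m_k\zeta^{-k}$. Since $n<N$, the grid $\mathbb{T}_{2N}$ contains $2N\ge 2n+2$ distinct points, more than enough to recover a Hermitian Laurent polynomial of degree at most $n$ from its values. Producing a banded lower-diagonal circulant $\Vb=\Circ\{V_0,\dots,V_n,0,\dots,0\}$ with $\Vb\Vb^{*}=\Mb$ is thus equivalent, via Lemma~\ref{diagonalizationlem}, to producing an analytic polynomial $V(\zeta)=\sum_{k=0}^{n}V_k\zeta^{-k}$ satisfying $M(\zeta)=|V(\zeta)|^{2}$ as an identity on $\mathbb{T}$. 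This is a classical Fej\'er--Riesz factorization, and its outer solution yields a polynomial $V$ of degree $n$, unique up to a unimodular constant, which supplies the desired $\Vb$.

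The main step---and the one requiring genuine care---is verifying that $M(\zeta)\ge 0$ on the \emph{entire} unit circle, rather than just at the grid $\mathbb{T}_{2N}$ where positive definiteness of $\Mb$ alone guarantees $M(\zeta_j)>0$. In the setting where the lemma is invoked (spectra of ARMA type $P/Q$ with $P,Q\in\mathfrak{P}_+$, as in Section~\ref{bilateralsec}), this global nonnegativity is naturally built in; once it is secured, the outer Fej\'er--Riesz factor provides the banded lower-diagonal circulant $\Vb$ and closes the proof.
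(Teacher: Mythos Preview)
Your approach via symbols is exactly what the paper has in mind: the paper gives no detailed proof but says the lemma is ``easy to verify in terms of symbols'' and refers to \cite{Carlietal} for the block case. The reduction of $\Mb=\Vb\Vb^*$ to the symbol identity $M=|V|^{2}$, together with the observation that (since $n<N$) agreement at the $2N$ grid points forces agreement as Laurent polynomials, is precisely the intended route, and your forward direction is correct.

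You have also correctly isolated the real difficulty in the converse: Fej\'er--Riesz needs $M(e^{i\theta})\ge 0$ on all of $\mathbb{T}$, whereas positive definiteness of $\Mb$ only gives $M(\zeta_j)>0$ on the grid. Your attempt to dismiss this by appealing to Section~\ref{bilateralsec} does not quite work, however: there the lemma is applied to $\Pb,\Qb$ with symbols in $\mathfrak{P}_+(N)$, not $\mathfrak{P}_+$, so global nonnegativity is not guaranteed by the hypotheses actually in force. In fact the gap you flag is genuine rather than cosmetic. With $n=2$, $N=3$ the degree-two symbol $M(\theta)=(2\cos\theta-\tfrac34)^2-\tfrac{1}{32}$ is positive at all six grid points (minimum grid value $\tfrac{1}{32}$ at $\theta=\pm\pi/3$) but negative near $\cos\theta=\tfrac38$; the corresponding $\Mb$ is positive definite and bilaterally banded of order $2$, yet no banded lower-diagonal $\Vb$ of order $2$ can satisfy $\Vb\Vb^*=\Mb$, since by your own degree count that would force $|V|^2=M$ identically on $\mathbb{T}$. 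The converse therefore needs the additional hypothesis that the symbol lies in $\overline{\mathfrak{P}_+}$ (nonnegative on the full circle); under that hypothesis your Fej\'er--Riesz argument is complete and correct.
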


By Theorem~\ref{mainthm_matrix}, $\Sigmab=\Qb^{-1}\Pb$, where $\Qb$ and $\Pb$ are banded, positive definite, Hermitian, circulant matrices of order $n$. 
Hence, by Lemma~\ref{lem:bandedfactor},  there are factorizations
\begin{displaymath}
\Qb=\Ab\Ab^*\quad\text{and}\quad \Pb=\Bb\Bb^*,
\end{displaymath}
where $\Ab$ and $\Bb$ are banded lower-diagonal circulant matrices of order $n$. Consequently, $\Sigmab=\Ab^{-1}\Bb(\Ab^{-1}\Bb)^*$, i.e., 
\begin{equation}
\label{W=AinvB}
\Wb=\Ab^{-1}\Bb,
\end{equation}
which together with \eqref{wdefn} yields $\Ab\yb=\Bb\wb$, i.e., the unilateral ARMA model
\begin{equation}
\label{unilateralARMA}
\sum_{k=0}^n a_k y(t-k) = \sum_{k=0}^n b_k w(t-k).
\end{equation}
Since $\Ab$ is nonsingular, $a_0\ne 0$, and hence we can normalize by setting $a_0=1$.
In particular, if $\Pb=\Ib$, we obtain the AR representation
\begin{equation}
\label{unilateralAR}
\sum_{k=0}^n a_k y(t-k) = b_0 w(t).
\end{equation}

Symmetrically, there is factorization
\begin{equation}
\label{matrixWbarWbar*}
\Sigmab=\bar{\Wb}\bar{\Wb}^*,
\end{equation}
where $\bar{\Wb}$ is upper-diagonal circulant, i.e. the transpose of a lower-diagonal circulant matrix, and a white-noise process
\begin{equation}
\label{wbardefn}
\bar\wb=\bar{\Wb}^{-1}\yb.
\end{equation}
Likewise there are factorizations 
\begin{displaymath}
\Qb=\bar\Ab\bar\Ab^*\quad\text{and}\quad \Pb=\bar\Bb\bar\Bb^*,
\end{displaymath}
where $\bar\Ab$ and $\bar\Bb$ are banded upper-diagonal circulant matrices of order $n$. This yields a backward unilateral ARMA model
\begin{equation}
\label{unilateralARMAbar}
\sum_{k=-n}^0 \bar a_k y(t-k) = \sum_{k=-n}^0 \bar b_k \bar w(t-k).
\end{equation}

These representations should be useful in the smoothing problem for periodic systems \cite{Levy-F-K-90}. 

\section{Reciprocal processes and the covariance selection problem}\label{reciprocalsec}

 Let $\Ab$, $\Bb$ and $\Xb$ be subspaces  in a certain common ambient Hilbert space of zero mean second order random variables. 
 We say that 
$\Ab$ and $\Bb$ are {\em conditionally orthogonal\/} given $\Xb$ if 
 \begin{equation}
\label{condorthPerp}
\alpha -\hat\E\{\alpha\mid\Xb\}\perp \beta -\hat\E\{\beta\mid\Xb\}, \quad \forall\alpha\in\Ab, \forall\beta\in\Bb 
\end{equation}
(see, e.g.,  \cite{LPbook}), which we denote $\Ab\perp\Bb\mid\Xb$, and which clearly is equivalent to 
\begin{equation}
\label{condorthPerp2}
\E\left\{\hat\E\{\alpha\mid\Xb\}\overline{\hat\E\{\beta\mid\Xb\}}\right\}=\E\{\alpha\overline{\beta}\}, \quad \forall\alpha\in\Ab, \forall\beta\in\Bb.
\end{equation}
Conditional orthogonality is the same as conditional uncorrelatedness, and hence conditional independence  in the Gaussian case.

Let $\yb_{[t-n,t)}$  and $\yb_{(t ,t+n]}$ be the $n$-dimensional random column vectors obtained by stacking $y(t-n),y(t-n+1) \ldots, y(t-1)$  and $y(t+1),y(t+2) \ldots, y(t+n)$, respectively, in that order. In the same way,  $\yb_{[t-n,t]}$ is obtained by appending  $y(t)$ to $\yb_{[t-n,t)}$ as the last element, etc. Here and in the following the sums $t-k$  and $t+k$   are to be understood modulo $2N$. For any interval $(t_1,t_2)\subset [-N+1,N]$, we denote by $(t_1,t_2)^c$ the complementary set in $[1,2N]$.

\begin{defn}\label{def:Recn}
A {\em   reciprocal process of order $n$}  on $ (-N,N]$ is a process $\{y(t); \, t=-N+1,\dots,N\}$ such that 
\begin{equation}
\label{Rec}
\hat\E\{y(t)\mid  y(s),\, s \neq t\}  = \hat\E\{y(t)\mid \yb_{[t-n,t)}\vee\yb_{(t ,t+n]}\}
\end{equation}
for $t\in (-N,N]$.
\end{defn}

This is a generalization introduced in \cite{Carli-FPP} of the concept of {\em reciprocal process}, which can be trivially extended to vector processes. In fact, a reciprocal process in the original sense is here a reciprocal process of order one. This concept does not require stationarity, although in the paper it will always be assumed.

It follows from \cite[Proposition 2.4.2 (iii)]{LPbook} that $\{y(t)\}$ is reciprocal of order $n$ if and only if 
\begin{equation}\label{RecMod}
\hat\E\{y(t)\mid  y(s),\, s \in [t-n, \,t+n]^c \}  = \hat\E\{y(t)\mid \yb_{[t-n,t)}\vee\yb_{(t ,t+n]}\} 
\end{equation}
for $t \in[-N+1,N]$. In particular, 
the estimation error
\begin{equation}
 \label{finconjn2}
\begin{split}
  d(t)   & :=  y(t)-\hat\E\{y(t)\mid y(s),\, s \neq t\} \\  
        				 &   = y(t) -\hat\E\{y(t)\mid \yb_{[t-n,t)}\vee\yb_{(t ,t+n]}\}
\end{split}
\end{equation}
must clearly be  orthogonal to all random variables $\{y(s),\, s \neq t\}$; i.e. $\E\{d(t)\overline{y(s)}\}=\sigma^2\delta_{st}$, where $\sigma^2$ is the variance of $d(t)$. 
Then $e(t):=d(t)/\sigma^2$ is the (normalized) conjugate process of $y$ satisfying \eqref{eystar}, i.e., 
\begin{equation}
\label{conjugate}
\E\{e(t)\overline{y(s)}\}=\delta_{ts}.
\end{equation} 
 Since $e(t+k)$ is a linear combination of the components of the random vector $\yb_{[t+k-n,t+k+n]}$, it follows from \eqref{conjugate} that both $e(t+k)$ and  $e(t-k)$ are orthogonal to $e(t)$ for $k >n$. Hence the process $\{e(t)\}$  has   correlation   bandwidth~$n$, i.e.,
\begin{equation}\label{MARecipn}
\E\{e(t+k)\,e(t)^*\} = 0 \quad \text{for $n < |k|  < 2N-n, \; k\in [-N+1,N]$}, 
\end{equation}
and consequently $(\yb,\eb)$ satisfies \eqref{Ay=e}, where $\Gb$ is banded of order $n$, which corresponds to an AR representation \eqref{ME-AR}.

Consequently, the AR solutions of the rational circulant covariance extension problem are precisely the ones corresponding to a reciprocal process $\{y(t)\}$ of order $n$. Next we demonstrate how this representation is connected to the {\em covariance selection problem\/} of Dempster \cite{Dempster} by deriving a generalization of this seminal result.  

Let $J:=\{j_1,\dots,j_p\}$ and $K:=\{k_1,\dots,k_q\}$ be two subsets of $\{-N+1,-N+2,\dots, N\}$, and define $\yb_J$ and $\yb_K$ as the subvectors of $\yb=(y_{-N+1},y_{-N+2}, \cdots, y_N)\Tr$ with indices in $J$ and $K$, respectively. Moreover, let 
\begin{displaymath}
\check{\Yb}_{J,K}:=  \text{span} \{ y(t); \; t\notin J,\; t\notin K\}= \check{\Yb}_{J}\cap\check{\Yb}_{K},
\end{displaymath}
where $\check{\Yb}_{J}:=\text{span} \{ y(t); \; t\notin J\}$. With a slight misuse of notation, we shall write 
\begin{equation}\label{CovSel}
\yb_J \perp \yb_K \mid  \check{\Yb}_{J,K},
\end{equation}
to mean that the subspaces spanned by the components of $\yb_J$ and $\yb_K$, respectively, are conditionally orthogonal given $\check{\Yb}_{J,K}$. This condition can be characterized in terms of the inverse of the covariance matrix $\Sigmab:=\E\{\yb\yb^*\}=\bmat  \sigma_{ij} \emat_{i,j=-N+1}^N$ of $y$.

\begin{thm}\label{genDempsterthm}
Let $\Gb:=\Sigmab^{-1}=\bmat  g_{ij} \emat_{i,j=1}^N$ be  the   concentration matrix  of the random vector $y$. Then the conditional orthogonality relation \eqref{CovSel} holds if and only if\/ $g_{jk}=0$  for all $(j,k)\in J\times K$.
\end{thm}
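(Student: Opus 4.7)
The plan is to reduce everything to a two-step Schur-complement argument. Write $L=J\cup K$ and $M=L^c$, so that $\check{\Yb}_{J,K}$ is spanned by the components of the subvector $\yb_M$. By the characterization \eqref{condorthPerp2} of wide-sense conditional orthogonality, \eqref{CovSel} is equivalent to the vanishing of the $(J,K)$ block of the residual covariance matrix $\E\{(\yb_L-\hat\E\{\yb_L\mid \yb_M\})(\yb_L-\hat\E\{\yb_L\mid \yb_M\})^*\}$.

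Next I would compute that residual covariance in closed form. Since the wide-sense conditional mean is the orthogonal projection $\hat\E\{\yb_L\mid \yb_M\}=\Sigmab_{LM}\Sigmab_{MM}^{-1}\yb_M$, the residual covariance equals the Schur complement $\Sigmab_{LL}-\Sigmab_{LM}\Sigmab_{MM}^{-1}\Sigmab_{ML}$. Invoking the standard block inversion identity applied to the partition of $\Sigmab$ along $L$ and $M$, this Schur complement is precisely $(\Gb_{LL})^{-1}$, where $\Gb_{LL}$ is the principal submatrix of $\Gb$ indexed by $L$. Thus \eqref{CovSel} is equivalent to the $(J,K)$ block of $(\Gb_{LL})^{-1}$ being zero.

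Finally, I would dispatch the nontrivial case $J\cap K=\emptyset$ (if $J$ and $K$ overlap, both sides of the equivalence fail automatically because $\Gb$ is positive definite and $y_j$ cannot equal its projection onto $\check{\Yb}_{J,K}$ for full-rank $\yb$). Partition $\Gb_{LL}$ into the four blocks determined by $J$ and $K$,
$$\Gb_{LL}=\bmat \Gb_{JJ} & \Gb_{JK} \\ \Gb_{KJ} & \Gb_{KK}\emat,$$
and apply the explicit block inversion formula once more: the $(J,K)$ block of $(\Gb_{LL})^{-1}$ is
$$-\,\Gb_{JJ}^{-1}\Gb_{JK}\bigl(\Gb_{KK}-\Gb_{KJ}\Gb_{JJ}^{-1}\Gb_{JK}\bigr)^{-1}.$$
Since the outer factors are nonsingular (as $\Gb_{LL}$ inherits positive definiteness from $\Gb$), this block vanishes if and only if $\Gb_{JK}=0$, i.e., $g_{jk}=0$ for all $(j,k)\in J\times K$.

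The main obstacle is purely bookkeeping: the argument requires two nested Schur-complement computations, one to identify the conditional covariance with the inverse of $\Gb_{LL}$ and another to extract the off-diagonal block of that inverse, and one must keep the roles of $\Sigmab$-blocks and $\Gb$-blocks carefully distinguished. No new analytic difficulty arises beyond invoking these standard identities.
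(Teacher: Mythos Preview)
Your argument is correct, but it follows a genuinely different route from the paper's. The paper does \emph{not} condition on the common space $\check{\Yb}_{J,K}$ at the outset; instead it forms the ``asymmetric'' residuals $\tilde{\yb}_J=\yb_J-\hat\E\{\yb_J\mid\check{\Yb}_J\}$ and $\tilde{\yb}_K=\yb_K-\hat\E\{\yb_K\mid\check{\Yb}_K\}$ (each conditioned on everything outside its own index set), shows by a direct projection computation that $\E\{\tilde{\yb}_J\tilde{\yb}_K^*\}=\tilde{\Sigmab}_J\Pi_J^*\Sigmab^{-1}\Pi_K\tilde{\Sigmab}_K$, and then invokes a separate lemma (cited from the authors' monograph) to identify the vanishing of this cross-covariance with the conditional orthogonality \eqref{CovSel}. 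In contrast, you condition on the common space $\yb_M$ from the start, so \eqref{CovSel} is immediate, and you pay for that convenience with two nested Schur-complement steps: one to identify the residual covariance of $\yb_L$ given $\yb_M$ with $(\Gb_{LL})^{-1}$, and a second to read off when its $(J,K)$ block vanishes. Your version is more self-contained---no appeal to an external lemma---and you also handle the overlap case $J\cap K\neq\emptyset$ explicitly, which the paper leaves implicit. The paper's version, on the other hand, gets the submatrix $\Gb_{JK}$ in one stroke and pushes the subtlety into the cited lemma.
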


\begin{proof}
Let $E_J$ be the $2N\times 2N$ diagonal matrix with ones in the positions $(j_1,j_1),\dots,\\(j_m,j_m)$ and zeros elsewhere and let $E_K$ be defined similarly in terms of index set $K$. Then $\check{\Yb}_{J}$ is spanned by the components of $\yb-E_J\yb$ and $\check{\Yb}_{K}$ by the components of $\yb-E_K\yb$. Let 
\begin{displaymath}
\tilde{\yb}_K := \yb_K- \hat\E\{\yb_K\mid\check{\Yb}_{K}\},
\end{displaymath}
and note that its $q\times q$ covariance matrix 
\begin{equation*}
\tilde{\Sigmab}_K := \E\{\tilde{\yb}_K \tilde{\yb}_K^*\}
\end{equation*}
must be positive definite, for otherwise some linear combination of the components of $\yb_K$ would belong yo $\check{\Yb}_{K}$. Let $\tilde{\yb}_K=G_K\yb$ for some $q\times 2N$ matrix $G_K$. Since $\tilde{\yb}_K\perp\check{\Yb}_{K}$, 
\begin{equation*}
\E\{\tilde{\yb}_K(\yb-E_K\yb)^*\}=0
\end{equation*}
 and therefore $\E\{\tilde{\yb}_K\yb^*\}= G_K\Sigmab$ must be equal to $\E\{\tilde{\yb}_K(E_K\yb)^*\}$, which by $\tilde{\yb}_K \in \check{\Yb}_{K}^{\perp}$, in turn equals 
\begin{displaymath}
\E\{\tilde{\yb}_K(E_K\yb)^*\}=\E\{\tilde{\yb}_K\hat\E\{(E_K\yb)^*\mid \check{\Yb}_{K}^{\perp}\}\}.
\end{displaymath}
However, since the  nonzero components of $\hat\E\{E_K\yb\mid \check{\Yb}_{K}^{\perp}\}$ are those of $\tilde\yb_K$, there is an $2N\times q$ matrix $\Pi_K$ with  the unit vectors $e^{\prime}_{k_i}$,  $i=1,\ldots,q$, as the rows such that
\begin{displaymath}
\hat\E\{E_K\yb\mid \check{\Yb}_{K}^{\perp}\}=\Pi_K\tilde{\yb}_K,
\end{displaymath}
and hence 
\begin{displaymath}
\E\{\tilde{\yb}_K(E_K\yb)^*\}=\E\{\tilde{\yb}_K\tilde{\yb}_K^*\}\Pi_K^*=\tilde{\Sigmab}_K\Pi_K^*. 
\end{displaymath}
Consequently, $G_K\Sigmab=\tilde{\Sigmab}_K\Pi_K^*$, i.e., 
\begin{displaymath}
G_K=\tilde{\Sigmab}_K\Pi_K^*\Sigmab^{-1}.
\end{displaymath}
In the same way, $\tilde{\yb}_J=G_J\yb$, where $G_J$ is the $q\times 2N$ matrix 
\begin{displaymath}
G_J=\tilde{\Sigmab}_J\Pi_J^*\Sigmab^{-1},
\end{displaymath}
and therefore 
\begin{displaymath}
\E\{\tilde{\yb}_J\tilde{\yb}_K^*\} =\tilde{\Sigmab}_J\Pi_J^*\Sigmab^{-1}\Pi_K\tilde{\Sigmab}_K,
\end{displaymath}
which is zero if and only if $\Pi_J^*\Sigmab^{-1}\Pi_K=0$, i.e., $g_{jk}=0$  for all $(j,k)\in J\times K$.

It remains to show that $\E\{\tilde{\yb}_J\tilde{\yb}_K^*\}=0$ is equivalent to \eqref{CovSel}, which in view of  \eqref{condorthPerp2}, can be written
\begin{equation*}
\E\left\{\hat\E\{\yb_J\mid \check{\Yb}_{J,K}\}\{\hat\E\{\yb_K\mid \check{\Yb}_{J,K}\}^*\right\}=\{\yb_J\yb_K^*\}.
\end{equation*}
However,
\begin{displaymath}
E\{\tilde{\yb}_J\tilde{\yb}_K^*\}=E\{\yb_J\yb_K^*\} -\E\left\{\hat\E\{\yb_J\mid \check{\Yb}_J\}\{\hat\E\{\yb_K\mid \check{\Yb}_K\}^*\right\},
\end{displaymath}
so the proof will complete if we show that 
\begin{equation}
\label{Lemma2.6.9}
\E\left\{\hat\E\{\yb_J\mid \check{\Yb}_J\}\{\hat\E\{\yb_K\mid \check{\Yb}_K\}^*\right\}=
\E\left\{\hat\E\{\yb_J\mid \check{\Yb}_{J,K}\}\{\hat\E\{\yb_K\mid \check{\Yb}_{J,K}\}^*\right\}
\end{equation}
the proof of which follows precisely the lines of Lemma 2.6.9 in \cite[p. 56]{LPbook}. 
\end{proof}

Taking $J$ and $K$ to be singletons we recover as a special case Dempster's original result \cite{Dempster}. 

To connect back to Definition~\ref{def:Recn} of a reciprocal process of order $n$, use the  equivalent condition \eqref {RecMod} so that, with $J=\{t\}$ and $K= [t-n, \,t+n]^c $,    $\yb_J=y(t)$ and $\yb_{K}$ are conditionally orthogonal given  $\check{\Yb}_{J,K}= \yb_{[t-n,t)}\vee\yb_{(t ,t+n]}$.
Then $J\times K$ is the set $\big \{ t \times  [t-n, \,t+n]^c \,;\, t\in (-N,\,N]\,\big \}$, and hence
Theorem~\ref{genDempsterthm} states precisely that the circulant matrix $\Gb$ is banded of order $n$. We stress that in general $\Gb=\Sigmab^{-1}$ is not banded, as the underlying process $\{y(t)\}$ is not reciprocal of degree $n$,
and we then have an ARMA representation as explained in Section~\ref{bilateralsec}.

\section{Detemining $\Pb$ with the help of logarithmical moments}\label{logarithmicsec}

We have shown that the solutions of the circulant rational covariance extension problem, as well as the corresponding bilateral ARMA models, are completely parameterized by $P\in\mathfrak{P}_+(N)$, or, equivalently, by their corresponding banded circulant matrices $\Pb$. This leads to the question of how to determine the $\Pb$ from given data.

To this end, suppose that we are also given the logarithmic moments
\begin{equation}
\label{cepstrum}
\gamma_k=\int_{-\pi}^\pi e^{ik\theta}\log\Phi(e^{i\theta})d\nu, \quad k=1,2,\dots,n.
\end{equation}
In the setting of the classical trigonometric moment problem such moments are known as {\em cepstral coefficients}, and in speech processing, for example, they are estimated from observed data for purposes of  design.

Following \cite{LPcirculant} and, in the context of the trigonometric moment problem, \cite{Musicus,BEL1,BEL2,PE}, we normalize the elements in $\mathfrak{P}_+(N)$ to define $\tilde{\mathfrak{P}}_+(N):=\{P\in\mathfrak{P}_+(N)\mid p_0=1\}$ and consider the problem to find a nonnegative integrable $\Phi$ maximizing
\begin{equation}
\label{I(Phi)}
\mathbb{I}(\Phi) =\int_{-\pi}^\pi \log\Phi(e^{i\theta})d\nu =\frac{1}{2N}\sum_{j=-N+1}^N \log\Phi(\zeta_j)
\end{equation}
subject to the moment constraints \eqref{discretemoments} and \eqref{cepstrum}. It is shown in \cite{LPcirculant} that  
if there is a maximal $\Phi$ that is positive on the unit circle, it is given by 
\begin{equation}
\label{Phi2}
\Phi(\zeta)=\frac{P(\zeta)}{Q(\zeta)},
\end{equation}
where $(P,Q)$ is the unique solution of the dual problem to minimize
\begin{equation}
\label{J(P,Q)}
\mathbb{J}(P,Q)=\langle \cb,\qb\rangle -\langle \gammab,\pb\rangle + \int_{-\pi}^\pi P(e^{i\theta})\log\left(\frac{P(e^{i\theta})}{Q(e^{i\theta})}\right)d\nu
\end{equation}
over all $(P,Q)\in\tilde{\mathfrak{P}}_+(N)\times\mathfrak{P}_+(N)$, where $\gammab=(\gamma_0,\gamma_1,\dots,\gamma_n)$ and $\pb=(p_0, p_1,\dots,p_n)$ with $\gamma_0=0$ and $p_0=1$.

The problem is that the dual problem might have a minimizer on the boundary so that there is no stationery point in the interior, and then the constraints \eqref{cepstrum} will in general not be satisfied  \cite{LPcirculant}. Therefore the problem needs to be regularized in the style of \cite{PEthesis}. More precisely, we consider the regularized problem to minimize
\begin{equation}
\label{J(P,Q)reg}
\mathbb{J}_\lambda(P,Q) =\mathbb{J}(P,Q) -\lambda\int_{-\pi}^\pi \log P(e^{i\theta})d\nu
\end{equation}
for some suitable $\lambda>0$  over all $(P,Q)\in\tilde{\mathfrak{P}}_+(N)\times\mathfrak{P}_+(N)$. Setting $\mathbf{J}_\lambda(\Pb,\Qb):=2N\mathbb{J}_\lambda(P,Q)$, \eqref{J(P,Q)reg} can be written
\begin{equation}
\label{Jlambda}
\mathbf{J}_\lambda(\Pb,\Qb) =\text{\rm tr}\{\Cb\Qb\} -\text{\rm tr}\{\Gammab\Pb\} + \text{\rm tr}\{\Pb\log\Pb\Qb^{-1}\} - \lambda\,\text{\rm tr}\{\log\Pb\},
\end{equation}
where $\Gammab$ is the Hermitian circulant matrix with symbol
\begin{equation}
\label{M(z)}
\Gamma(\zeta)=\sum_{k=-n}^n \gamma_k \zeta^{-k}, \quad \gamma_{-k}=\bar{\gamma}_k.
\end{equation}

Therefore, in the circulant matrix form, the regularized dual problem amounts to minimizing  \eqref{Jlambda} over all banded Hermitian circulant matrices $\Pb$ and $\Qb$ of order $n$ subject to $p_0=1$. It is shown in \cite{LPcirculant}  that 
\begin{equation}
\label{Sigmab2}
\Sigmab=\Qb^{-1}\Pb,
\end{equation}
or, equivalently in symbol form \eqref{Phi2}, maximizes 
\begin{equation}
\label{ }
\mathbf{I}(\Sigmab)=\text{\rm tr}\{\log\Sigmab\} =\log\det\Sigmab,
\end{equation}
or, equivalently \eqref{I(Phi)}, subject to \eqref{discretemoments} and \eqref{cepstrum}, the latter constraint modified so that the logarithmic moment $\gamma_k$ is exchanged for $\gamma_k+\varepsilon_k$, $k=1,2,\dots,n$, where
\begin{equation}
\label{epsilon}
\varepsilon_k=\int_{-\pi}^\pi e^{ik\theta}\frac{\lambda}{\hat P(e^{i\theta})}d\nu= \frac{\lambda}{2N}\text{\rm tr}\{\Sb^k\hat\Pb^{-1}\}, 
\end{equation}
$\hat P$ being the optimal $P$. 

The following example from \cite{LPcirculant}, provided by Chiara Masiero, illustrates the advantages of this procedure. We start from an ARMA model with $n=8$ poles and three zeros distributed as in Figure \ref{figure3}, from which we compute $\cb=(c_0,c_1,\dots,c_n)$ and  $\gammab=(\gamma_1,\dots,\gamma_n)$ for various choices of the order $n$. 
\begin{figure}
\centering
\includegraphics[height=5.4cm,width=5.4cm]{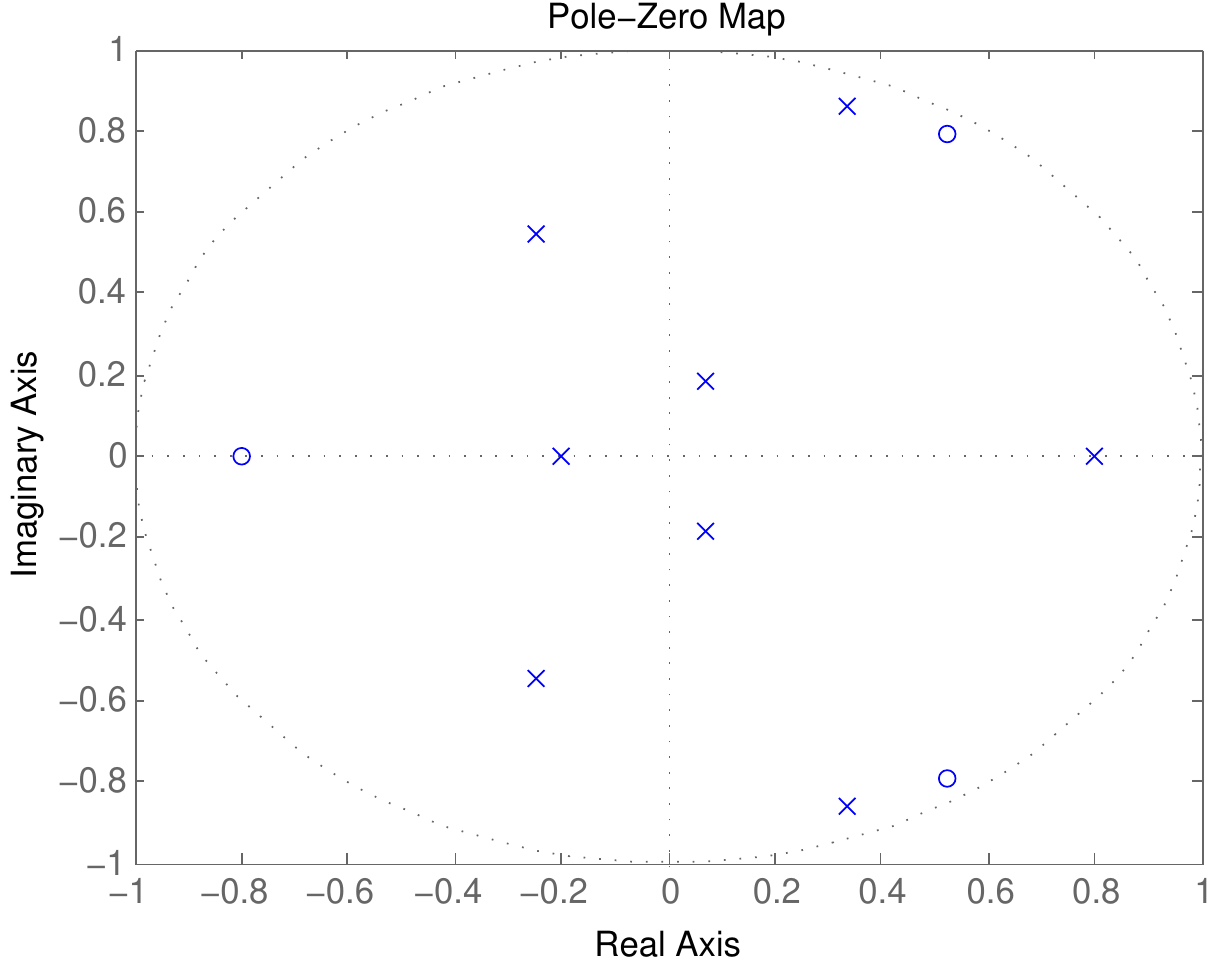}
\caption{\small Poles and zeros of true ARMA model.}
\label{figure3}
\end{figure}
First we determine the maximum entropy solution from $\cb$ with $n=12$ and $N=1024$. The resulting spectral function $\Phi$ is depicted in the left plot of Figure~\ref{figures5} together with the true spectrum. Next we compute $\Phi$ by the procedure in this section using $\cb$ and $\gammab$ with $n=8$ and $N=128$. The result is depicted in the right plot of Figure~\ref{figures5} again together with the true spectrum. This illustrates the advantage of bilateral ARMA modeling as compared to bilateral AR modeling, as a much lower value on $N$ provides a better approximation, although $n$ is smaller. 

\begin{figure}
\centering
\includegraphics[height=4cm]{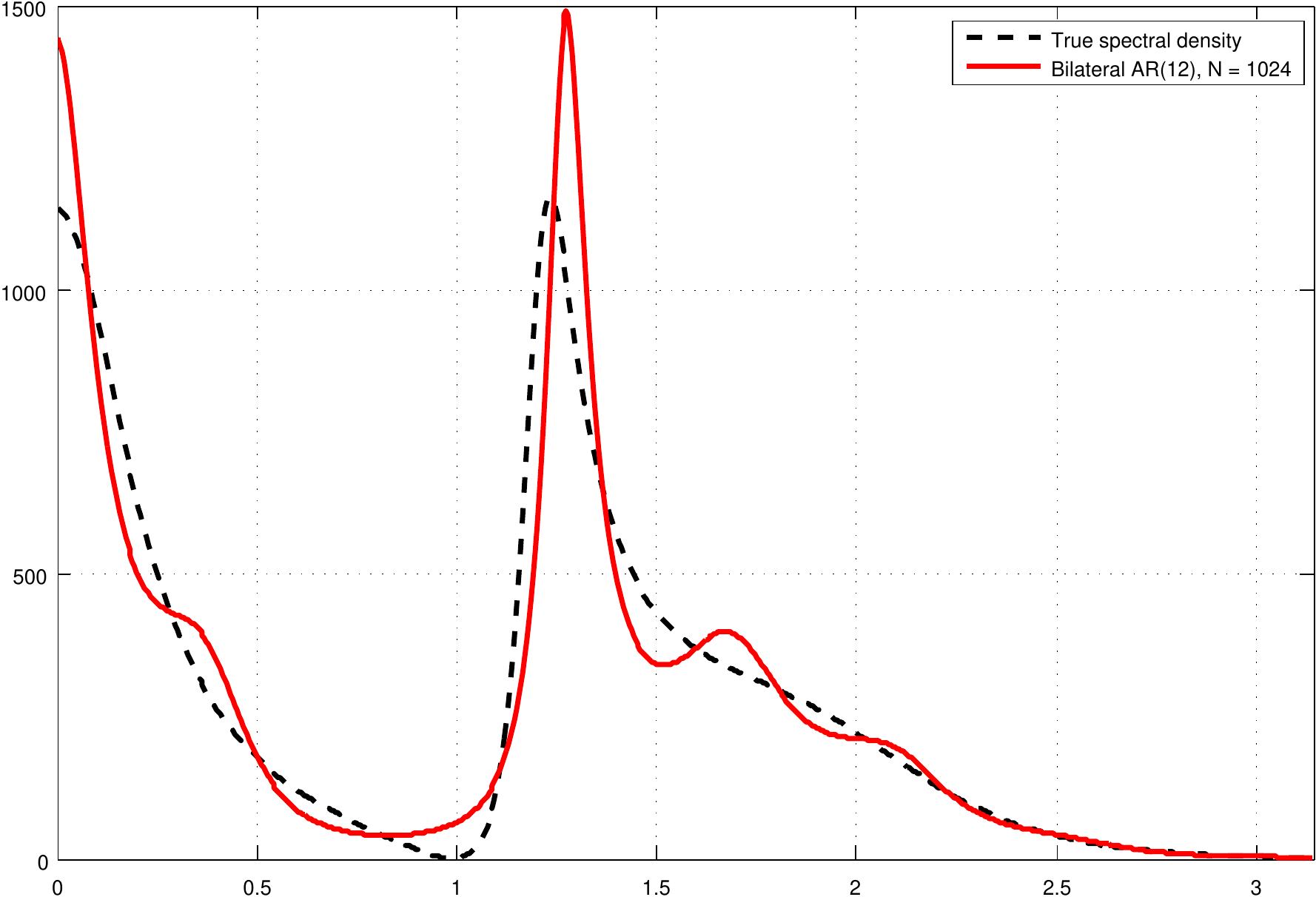}
\qquad
\includegraphics[height=4cm]{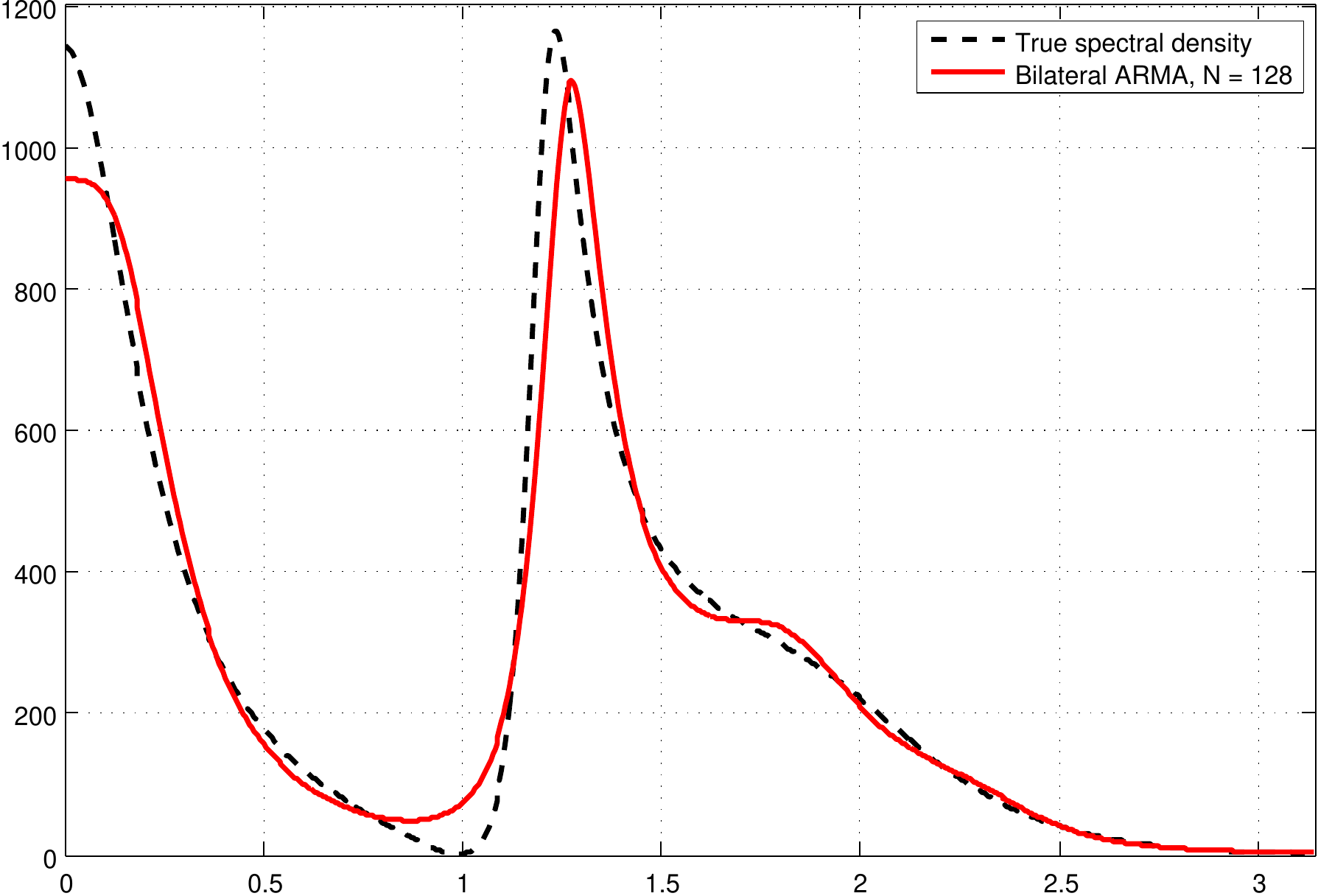}
\caption{\small  Bilateral approximations with true spectrum (dashed): (left) bilateral AR with $n=12$  and $N=1024$; (right) bilateral ARMA with $n=8$ and $N=128$ using both covariance and logarithmic moment estimates.}
\label{figures5}
\end{figure}

\section{Extensions to multivariate case}\label{multivariatesec}

To simplify notation we have so far restricted our attention to scalar stationary periodic processes. We shall now demonstrate that most of the results can be simply extended to the multivariate case, provided we restrict the analysis to scalar pseudo-polynomials $P(\zeta)$. In fact, most of the equations in the previous section will remain intact if we allow ourselves to interpret the scalar quantities as matrix-valued ones. 

Let $\{y(t)\}$ be a zero-mean stationary $m$-dimensional process $\{y(t)\}$  defined on $\Zbb_{2N}$;  i.e., a stationary process defined on a finite interval $[-N+1,\,N]$ of the integer line $\Zbb$ and extended to all of $\Zbb$ as a periodic stationary process with  period $2N$. Moreover, let $C_{-N+1},C_{-N+2},\dots,C_{N}$ be the $m\times m$ covariance lags $C_k:=\E\{ y(t+k)y(t)^*\}$, and define its discrete Fourier transformation 
\begin{equation} \label{c2Phi}
\Phi(\zeta_{j}) := \sum_{k=-N+1 }^{N }\, C_k \zeta_{j}^{-k} \,,\qquad j=-N+1,\dots , N,
\end{equation}
which is a positive, Hermitian matrix-valued function of $\zeta$. Then, by the inverse discrete Fourier transformation,
\begin{equation} \label{Phi2c}
C_k = \frac{1}{2N}\sum_{j=-N+1 }^{N  }\zeta_{j}^k \Phi(\zeta_j)
=\int_{-\pi}^\pi e^{ik\theta}\Phi(e^{i\theta})  d\nu,\quad k =  -N+1, \dots ,N,
\end{equation}
where the Stieljes measure $d\nu$ is given by \eqref{nu}. The $m\times m$ matrix  function $\Phi$ is the {\em spectral density\/} of the vector process $y$. In fact, let
\begin{equation}
\label{yDFT}
\hat{y}(\zeta_k):= \sum_{t=-N+1}^N y(t)\zeta_k^{-t}, \quad k=-N+1,\dots, N,
\end{equation}
be the discrete Fourier transformation of the process $y$. Since 
\begin{displaymath}
\frac{1}{2N}\sum_{t=-N+1}^N (\zeta_k\zeta_\ell^*)^t =\delta_{k\ell}
\end{displaymath}
by \eqref{delta}, the random variables \eqref{yDFT} are uncorrelated, and 
\begin{equation}
\label{yhatyhat}
\frac{1}{2N}\E\{ \hat{y}(\zeta_k)\hat{y}(\zeta_\ell)^*\}=\Phi(\zeta_{k})\delta_{k\ell}.
\end{equation}
This yields a spectral representation of $y$ analogous to the usual one, namely
\begin{equation}
\label{ }
y(t)=\frac{1}{2N}\sum_{k=-N+1}^N \zeta_k^t\,\hat{y}(\zeta_k)=\int_{-\pi}^\pi e^{ik\theta}d\hat{y}(\theta),
\end{equation}
where $d\hat{y}:=\hat{y}(e^{i\theta})d\nu$.

Next, we define the class $\mathfrak{P}_+^{(m,n)}(N)$ of $m\times m$ Hermitian pseudo-polynomials
\begin{equation}
\label{Qmatrix}
Q(\zeta)=\sum_{k=-n}^n Q_k\zeta^{-k}, \quad Q_{-k}=Q_k^*
\end{equation}
of degree at most $n$ that are positive definite on the discrete unit circle $\mathbb{T}_{2N}$, and let  $\mathfrak{P}_+^{(m,n)}\subset\mathfrak{P}_+^{(m,n)}(N)$ be the subset of all \eqref{Qmatrix} such that $Q(e^{i\theta})$ is positive define for all $\theta\in [-\pi,\pi]$. Moreover let $\mathfrak{C}_+^{(m,n)}(N)$ be the dual cone of all $C=(C_0,C_1,\dots,C_n)$ such that
\begin{displaymath}
\langle C,Q\rangle :=\sum_{k=-n}^n \trace\{C_k Q_k^*\} >0 \quad \text{for all $Q\in\overline{\mathfrak{P}_+^{(m,n)}(N)}\setminus\{0\}$},
\end{displaymath}
and let $\mathfrak{C}_+^{(m,n)}\supset\mathfrak{C}_+^{(m,n)}(N)$ be defined as the dual cone of $\mathfrak{P}_+^{(m,n)}$. Analogously to the scalar case it can be shown that $C\in\mathfrak{C}_+^{(m,n)}$ if and only if the block-Toeplitz matrix
\begin{equation}
\label{blockToeplitz}
\Tb_n=\begin{bmatrix} C_0&C_1^*&C_2^*&\cdots&C_n^*\\
				C_1&C_0&C_1^*&\cdots& C_{n-1}^*\\
				C_2&C_1&C_0&\cdots&C_{n-2}^*\\
				\vdots&\vdots&\vdots&\ddots&\vdots\\
				C_n&C_{n-1}&C_{n-2}&\cdots&C_0
 		\end{bmatrix}
\end{equation}
is positive definite \cite{LMPcirculantMult}, a condition that is necessary, but in general not sufficient, for $C\in\mathfrak{C}_+^{(m,n)}(N)$ to hold.

The basic problem is now the following. Given the sequence $C=(C_0,C_1,\dots,C_n)\in\mathfrak{C}_+^{(m,n)}(N)$ of $m\times m$ covariance lags, find an extension $C_{n+1},C_{n+2},\dots,C_N$ with $C_{-k}=C_k^*$ such that the spectral function  $\Phi$ defined by \eqref{c2Phi} has the rational form
\begin{equation}
\label{Phimatrix=PQinv}
\Phi(\zeta)=P(\zeta)Q(\zeta)^{-1}, \quad P\in\mathfrak{P}_+^{(1,n)}(N), \, Q\in\mathfrak{P}_+^{(m,n)}(N).
\end{equation} 

\begin{thm}\label{mainthm(matrix)}
Let $C\in\mathfrak{C}_+^{(m,n)}(N)$. Then, for each $P\in\mathfrak{P}_+^{(1,n)}(N)$, there is a unique $Q\in\mathfrak{P}_+^{(m,n)}(N)$ such that 
\begin{equation}
\label{Phi=P/Qmatrix}
\Phi=PQ^{-1}
\end{equation}
satisfies the moment conditions 
\begin{equation}
\label{matrixmoments}
\int_{-\pi}^\pi e^{ik\theta}\Phi(e^{i\theta})d\nu =C_k, \quad k=0,1,\dots,n.
\end{equation}
\end{thm}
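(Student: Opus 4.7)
The plan is to mimic, in the matrix-valued setting, the convex-optimization proof of Theorem~\ref{mainthm} given in \cite{LPcirculant}, which is already previewed here in Theorem~\ref{optthm}. That is, I would realize the desired $Q$ as the unique minimizer of a strictly convex dual functional over the open cone $\mathfrak{P}_+^{(m,n)}(N)$, and read off the moment conditions from the first-order stationarity equation. Note that because $P$ is scalar while $Q$ is matrix-valued, the natural candidate is
\begin{equation*}
\mathbb{J}_P(Q) \;=\; \langle C,Q\rangle \;-\; \int_{-\pi}^{\pi} P(e^{i\theta})\,\log\det Q(e^{i\theta})\,d\nu(\theta),
\end{equation*}
defined on $\mathfrak{P}_+^{(m,n)}(N)$, which is formally dual to the primal problem of maximizing $\int P\log\det\Phi\, d\nu$ over Hermitian positive-definite matrix spectra satisfying \eqref{matrixmoments}.

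First I would verify that $\mathbb{J}_P$ is strictly convex. The term $\langle C,Q\rangle$ is linear in $Q$, while $-\log\det$ is strictly convex on the cone of positive-definite Hermitian matrices. Weighting pointwise by the strictly positive numbers $P(\zeta_j)/(2N)$, the functional becomes a sum over $j=-N+1,\dots,N$ of strictly convex functions of $Q(\zeta_j)$. Because $Q$ has degree at most $n<N$ and $|\mathbb{T}_{2N}|=2N$, the linear map $Q\mapsto \big(Q(\zeta_{-N+1}),\dots,Q(\zeta_N)\big)$ is injective on the space of Hermitian matrix pseudo-polynomials of degree $\le n$ (this is the multivariate analogue of the DFT isomorphism used in Lemma~\ref{diagonalizationlem}), so strict convexity transfers to the coefficients of $Q$.

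Next I would establish that $\mathbb{J}_P$ is coercive on the open cone. Approaching the boundary of $\mathfrak{P}_+^{(m,n)}(N)$ makes $\det Q(\zeta_j)\to 0$ for some $j$, so $-\int P\log\det Q\,d\nu\to +\infty$ because $P(\zeta_j)>0$. The more delicate part, and the step I expect to be the main obstacle, is behavior at infinity. Writing $Q=r\widetilde Q$ with $\|\widetilde Q\|=1$ and $r\to\infty$ along a subsequence on which $\widetilde Q\to\widetilde Q_\infty\in\overline{\mathfrak{P}_+^{(m,n)}(N)}\setminus\{0\}$, one has $\langle C,Q\rangle=r\langle C,\widetilde Q\rangle$ which tends to $+\infty$ at rate $r$ thanks to the standing assumption $C\in\mathfrak{C}_+^{(m,n)}(N)$ (dual cone positivity), while the $\log\det$ term grows only logarithmically, $-\int P\log\det Q\,d\nu=-m\log r\cdot\int P\,d\nu+O(1)$. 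Hence $\mathbb{J}_P(Q)\to+\infty$. Combined with lower semicontinuity, this yields a unique minimizer $\widehat Q\in\mathfrak{P}_+^{(m,n)}(N)$ in the interior.

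Finally I would compute the first-order condition at $\widehat Q$. For any Hermitian matrix pseudo-polynomial $\delta Q$ of degree $\le n$, setting $\Phi:=P\widehat Q^{-1}$,
\begin{equation*}
0=\frac{d}{dt}\bigg|_{t=0}\mathbb{J}_P(\widehat Q+t\,\delta Q)=\langle C,\delta Q\rangle-\int_{-\pi}^{\pi}\mathrm{tr}\bigl(\Phi(e^{i\theta})\,\delta Q(e^{i\theta})\bigr)\,d\nu,
\end{equation*}
where the second equality uses the scalar nature of $P$ to commute it past the trace. Expanding $\delta Q=\sum_{k=-n}^n(\delta Q)_k\zeta^{-k}$ and exploiting that the bilinear form $\langle\cdot,\cdot\rangle$ is non-degenerate on Hermitian pseudo-polynomials of degree $\le n$, the identity holding for all admissible $\delta Q$ forces $\int e^{ik\theta}\Phi\,d\nu=C_k$ for $k=0,1,\dots,n$, which is \eqref{matrixmoments}. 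Strict convexity then gives uniqueness of $\widehat Q$, and $\Phi=P\widehat Q^{-1}$ is the required representation.
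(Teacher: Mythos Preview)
Your proposal is correct and follows essentially the same approach as the paper: the paper derives Theorem~\ref{mainthm(matrix)} as a direct consequence of Theorem~\ref{optthm(matrix)}, whose proof (deferred to \cite{LPcirculant}) proceeds exactly by showing that $\mathbb{J}_P(Q)=\langle C,Q\rangle-\int P\log\det Q\,d\nu$ is strictly convex and coercive on $\mathfrak{P}_+^{(m,n)}(N)$ and then reading off the moment conditions \eqref{matrixmoments} from the stationarity equation at the unique interior minimizer. One small cosmetic point: in your coercivity argument at infinity the remainder term is not literally $O(1)$ when the limit direction $\widetilde Q_\infty$ lies on the boundary of the cone, but in that case the remainder tends to $+\infty$, which only reinforces the conclusion.
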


Theorem~\ref{mainthm(matrix)} is a direct consequence of the following theorem, which also provides an algorithm for computing the solution. 

\begin{thm}\label{optthm(matrix)}
For each $(C,P)\in\mathfrak{C}_+^{(m,n)}(N)\times\mathfrak{P}_+^{(1,n)}(N)$, the problem to maximize the functional
\begin{equation}
\label{matrixprimal}
\mathbb{I}_P(\Phi) =\int_{-\pi}^\pi  P(e^{i\theta})\log\det \Phi(e^{i\theta})d\nu
\end{equation}
subject to the moment conditions \eqref{matrixmoments} has a unique solution $\hat{\Phi}$, and it has the form   
\begin{equation}
\label{Phiopt}
\hat{\Phi}(z)=P(z)\hat{Q}(z)^{-1},
\end{equation}  
where $\hat{Q}\in\mathfrak{P}_+^{(m,n)}(N)$ is the unique solution to the dual problem to minimize 
\begin{equation}\label{dual}
\mathbb{J}_P(Q)= \langle C,Q\rangle -\int_{-\pi}^\pi  P(e^{i\theta})\log \det Q(e^{i\theta})d\nu
\end{equation}
over all $Q\in\mathfrak{P}_+^{(m,n)}(N)$.
\end{thm}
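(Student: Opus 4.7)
The plan is to follow the convex-duality template established in Theorem~\ref{optthm} (and in \cite{BGuL,SIGEST,LPcirculant}), adapted to the matrix setting in which $P$ is scalar but $Q$ is Hermitian matrix-valued. The overall strategy is: (i) establish that $\mathbb{J}_P$ is strictly convex and coercive on $\mathfrak{P}_+^{(m,n)}(N)$ so that a unique minimizer $\hat Q$ exists in the interior; (ii) derive the first-order stationarity condition and show that it is equivalent to the moment conditions \eqref{matrixmoments} with $\hat\Phi:=P\hat Q^{-1}$; (iii) use the concavity of $\mathbb{I}_P$ to conclude that this $\hat\Phi$ is in fact the unique maximizer of the primal.

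For step (i), strict convexity of $\mathbb{J}_P$ follows because $\langle C,Q\rangle$ is linear in $Q$, whereas $-\log\det$ is strictly convex on the cone of $m\times m$ Hermitian positive-definite matrices; since $P(\zeta_j)>0$ at every point $\zeta_j\in\mathbb{T}_{2N}$ in the support of $d\nu$, the integrated term $-\int P\log\det Q\, d\nu$ inherits strict convexity. For coercivity one must control $\mathbb{J}_P$ both on the boundary and at infinity of the open cone $\mathfrak{P}_+^{(m,n)}(N)$. If $Q$ approaches the boundary, then $\det Q(\zeta_j)\to 0$ for some $j$, and since $P(\zeta_j)>0$, the term $-P(\zeta_j)\log\det Q(\zeta_j)/(2N)$ drives $\mathbb{J}_P$ to $+\infty$. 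For growth at infinity, the hypothesis $C\in\mathfrak{C}_+^{(m,n)}(N)$ provides a uniform lower bound $\langle C,Q\rangle\geq \alpha\|Q\|$ on rays in the closure of $\mathfrak{P}_+^{(m,n)}(N)$, while $-\int P\log\det Q\, d\nu$ grows only logarithmically; this linear-versus-logarithmic comparison forces $\mathbb{J}_P(Q)\to+\infty$.

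For step (ii), using the identity $d\log\det Q=\trace(Q^{-1}dQ)$, the Gateaux derivative of $\mathbb{J}_P$ at $\hat Q$ in an admissible Hermitian pseudo-polynomial direction $\delta Q$ is
\begin{equation*}
\delta\mathbb{J}_P(\hat Q)[\delta Q] = \langle C,\delta Q\rangle - \int_{-\pi}^\pi P(e^{i\theta})\,\trace\bigl(\hat Q(e^{i\theta})^{-1}\delta Q(e^{i\theta})\bigr)\,d\nu.
\end{equation*}
Writing $\delta Q(\zeta)=\sum_{k=-n}^n \delta Q_k \zeta^{-k}$ and expanding $\langle C,\delta Q\rangle=\sum_{k=-n}^n\trace(C_k\,\delta Q_k^*)$, the vanishing of the derivative for all admissible $\delta Q_k$ forces $C_k=\int_{-\pi}^{\pi}e^{ik\theta}P(e^{i\theta})\hat Q(e^{i\theta})^{-1}d\nu$ for $k=0,\dots,n$; since $P\hat Q^{-1}$ is Hermitian this also handles $k=-n,\dots,-1$ by conjugate symmetry. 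Thus $\hat\Phi=P\hat Q^{-1}$ satisfies \eqref{matrixmoments} and is feasible for the primal.

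For step (iii), concavity of $\log\det$ on the Hermitian positive-definite cone together with the pointwise inequality $\log\det X\le \log\det Y+\trace(Y^{-1}(X-Y))$ gives, for any feasible $\Phi$,
\begin{equation*}
\mathbb{I}_P(\Phi)-\mathbb{I}_P(\hat\Phi) \le \int_{-\pi}^\pi P\,\trace\bigl(\hat\Phi^{-1}(\Phi-\hat\Phi)\bigr) d\nu = \int_{-\pi}^\pi \trace\bigl(\hat Q(\Phi-\hat\Phi)\bigr) d\nu,
\end{equation*}
and the right-hand side vanishes because both $\Phi$ and $\hat\Phi$ satisfy the moment conditions and $\hat Q$ is a pseudo-polynomial of degree at most $n$. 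Uniqueness in the primal follows from strict concavity of $\log\det$ on the positive-definite matrices combined with $P>0$ on $\mathbb{T}_{2N}$. The main obstacle I expect is verifying coercivity in the matrix case, since one must control boundary degeneracy of $\det Q(\zeta_j)$ uniformly across $j$ while simultaneously estimating $\langle C,Q\rangle$ on unbounded rays in the closed cone; this requires the full strength of $C\in\mathfrak{C}_+^{(m,n)}(N)$ rather than mere positive-definiteness of the block Toeplitz matrix \eqref{blockToeplitz}.
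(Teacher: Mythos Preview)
Your proposal is correct and follows essentially the same convex-duality template that the paper invokes: the paper does not give a self-contained proof of Theorem~\ref{optthm(matrix)} but states that it ``follow[s] the lines of \cite{LPcirculant},'' i.e., the scalar argument of Theorem~\ref{optthm} with $\log$ replaced by $\log\det$. Your three-step outline (strict convexity and coercivity of $\mathbb{J}_P$, stationarity giving the moment conditions, and the gradient inequality for $\log\det$ yielding primal optimality) is exactly that argument, and in fact supplies more detail than the paper itself; the only point worth making explicit is that strict convexity of $\mathbb{J}_P$ as a function of the coefficients $(Q_0,\dots,Q_n)$ requires injectivity of the evaluation map $Q\mapsto (Q(\zeta_{-N+1}),\dots,Q(\zeta_N))$, which holds because a nonzero pseudo-polynomial of degree at most $n<N$ cannot vanish on all of $\mathbb{T}_{2N}$.
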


The proofs of Theorems~\ref{mainthm(matrix)} and \ref{optthm(matrix)} follow the lines of \cite{LPcirculant}. 
It can also be shown that the moment map sending $Q\in\mathfrak{P}_+^{(m,n)}(N)$ to $C\in\mathfrak{C}_+^{(m,n)}(N)$ is a diffeomorphism. 

To formulate a matrix version of Theorems~\ref{mainthm(matrix)} and \ref{optthm(matrix)} we need to introduce (Hermitian) block-circulant matrices
\begin{equation}
\label{S2C}
\Mb =\sum_{k=-N+1}^N S^{-k}\otimes M_k, \quad M_{-k}=M_k^*
\end{equation}
where $\otimes$ is the Kronecker product and $S$ is the nonsingular $2N\times 2N$  cyclic shift matrix \eqref{Sb}. The notation $\Sb$ will now be reserved for the $2mN\times 2mN$ block-shift matrix
\begin{equation}
\label{Sblarge}
\Sb = S\otimes I_m =\left[\begin{array}{cccccc}0 & I_m & 0 & \dots & 0 \\0 & 0 & I_m &  \dots & 0  \\\vdots & \vdots & \vdots & \ddots & \vdots \\0 & 0 & 0 &  0 & I_m \\I_m & 0 & 0 & 0  & 0\end{array}\right].
\end{equation}
As before $\Sb^{2N}=\Sb^0=\Ib :=I_{2mN}$, $\Sb^{k+2N}=\Sb^k$, and  $\Sb^{2N-k}=\Sb^{-k}=(\Sb^k)\Tr$. Moreover
\begin{equation}
 \label{circulantcondition}
\Sb\Mb \Sb^*=\Mb
\end{equation}
is both necessary and sufficient for $\Mb$ to be $m\times m$ block-circulant. The symbol of $\Mb$ is the $m\times m$ pseudo-polynomial
\begin{equation}
\label{symbol}
M(\zeta)=\sum_{k=-N+1}^N M_k \zeta^{-k}, \quad M_{-k}=M_k^*. 
\end{equation}
We shall continue using the notation 
\begin{equation}
\label{M_C}
\Mb:=\Circ\{ M_0,M_1,M_2,\dots, M_N,M_{N-1}^*,\dots,M_1^*\}
\end{equation}
also for (Hermitain) block-circulant matrices. 

The problem can now be reformulated in the following way. Given the banded block-circulant matrix 
\begin{equation}
\label{Cbmatrix}
\Cb =\sum_{k=-n}^n S^{-k}\otimes C_k, \quad C_{-k}=C_k^*
\end{equation}
of order $n$, find an  extension $C_{n+1},C_{n+2},\dots,C_N$ such that the block-circulant matrix
\begin{equation}
\label{Sigmabmatrix}
\Sigmab =\sum_{k=-N+1}^N S^{-k}\otimes C_k, \quad C_{-k}=C_k^*
\end{equation}
has the symbol \eqref{Phimatrix=PQinv}. 

To proceed we need a block-circulant version of Lemma~\ref{diagonalizationlem}. 

\begin{lem}\label{matrixdiagonalizationlem}
Let $\Mb$ be a block-circulant matrix with symbol $M(\zeta)$. Then 
\begin{equation}
\label{matrixMdiag}
\Mb=\Fb^*\text{\rm diag}\big(M(\zeta_{-N+1}),M(\zeta_{-N+2}),\dots,M(\zeta_N)\big)\Fb,
\end{equation}
where $\Fb$ is the unitary $2mN\times 2mN$ matrix
\begin{equation}
\label{Fmatrix}
\Fb= \frac{1}{\sqrt{2N}}\left[\begin{array}{cccc}\zeta_{-N+1}^{N-1}I_m & \zeta_{-N+1}^{N-2}I_m & \cdots & \zeta_{-N+1}^{-N} I_m\\ \vdots & \vdots & \cdots & \vdots \\\zeta_{0}^{N-1}I_m & \zeta_{0}^{N-2}I_m & \cdots & \zeta_{0}^{-N} I_m \\ \vdots & \vdots & \cdots & \vdots \\\zeta_{N}^{N-1}I_m & \zeta_{N}^{N-2} I_m& \cdots & \zeta_{N}^{-N}I_m \end{array}\right] .
\end{equation}
Moreover, if $M(\zeta_k)$ is positive definite for all $k$, then
\begin{equation}
\label{matrixlogM}
\log\Mb=\Fb^*\text{\rm diag}\big(\log M(\zeta_{-N+1}),\log M(\zeta_{-N+2}),\dots,\log M(\zeta_N)\big)\Fb,
\end{equation}
where $\text{\rm diag}$ stands for block diagonal.
\end{lem}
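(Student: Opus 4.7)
The plan is to reduce the block-circulant case to the scalar diagonalization in Lemma~\ref{diagonalizationlem} via the Kronecker product structure $\Sb = S\otimes I_m$ exhibited in \eqref{Sblarge}. Observing that the $2mN\times 2mN$ matrix $\Fb$ in \eqref{Fmatrix} factors as $\Fb = F\otimes I_m$, where $F$ is the scalar $2N\times 2N$ Vandermonde matrix of Lemma~\ref{diagonalizationlem}, the unitarity of $\Fb$ follows at once from the unitarity of $F$ and the identity $(F\otimes I_m)^*(F\otimes I_m) = F^*F\otimes I_m = \Ib$.

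Next I would apply Lemma~\ref{diagonalizationlem} to the scalar cyclic shift $S$, whose symbol is simply $\zeta$, obtaining
\begin{equation*}
S^{-k} = F^*\,\text{diag}\!\big(\zeta_{-N+1}^{-k},\dots,\zeta_N^{-k}\big)\,F, \qquad k\in\Zbb.
\end{equation*}
Substituting this into the representation \eqref{S2C} and using the elementary identity $(A\otimes B)(C\otimes D)=AC\otimes BD$ together with bilinearity of $\otimes$, I would compute
\begin{equation*}
\begin{split}
\Mb &= \sum_{k=-N+1}^N S^{-k}\otimes M_k
 = \sum_{k}(F^*\otimes I_m)\big(\text{diag}(\zeta_j^{-k})\otimes M_k\big)(F\otimes I_m)\\
&= \Fb^*\Big(\sum_{k}\text{diag}(\zeta_{-N+1}^{-k},\dots,\zeta_N^{-k})\otimes M_k\Big)\Fb.
\end{split}
\end{equation*}
The inner sum is block-diagonal whose $j$-th diagonal block is $\sum_{k}\zeta_j^{-k}M_k = M(\zeta_j)$, which proves \eqref{matrixMdiag}. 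The only step that requires a moment of thought is the bookkeeping in the Kronecker identity $\text{diag}(d_j)\otimes M_k = \text{block-diag}(d_j M_k)$; but this is immediate from the definitions.

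For the second identity, the hypothesis that every $M(\zeta_k)$ is positive definite makes the block-diagonal factor in \eqref{matrixMdiag} positive definite, and conjugation by the unitary $\Fb$ preserves positive definiteness, so $\Mb$ is positive definite and $\log\Mb$ is unambiguously defined by the principal branch. Since $\Fb$ is unitary and the middle factor is block-diagonal, the functional calculus respects both operations: for any function $f$ analytic on a neighborhood of the spectrum of the central block,
\begin{equation*}
f(\Mb)=\Fb^*\,\text{diag}\!\big(f(M(\zeta_{-N+1})),\dots,f(M(\zeta_N))\big)\,\Fb,
\end{equation*}
by the spectral mapping theorem \cite[p.~557]{DS}, exactly as in the scalar case at the end of the proof of Lemma~\ref{diagonalizationlem}. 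Specializing to $f=\log$ yields \eqref{matrixlogM}. I do not anticipate a genuine obstacle; the only care needed is to verify that $\Fb = F\otimes I_m$ is indeed the matrix written in \eqref{Fmatrix}, which is a direct comparison of entries.
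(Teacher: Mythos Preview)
Your proof is correct. The paper in fact omits the proof of this lemma entirely, merely stating that it ``follows the same lines as that of Lemma~\ref{diagonalizationlem} with straight-forward modification to the multivariate case.'' Your Kronecker-product reduction is a clean and efficient way to carry out precisely that modification: by recognizing $\Fb=F\otimes I_m$ and invoking the scalar diagonalization $S^{-k}=F^*\text{diag}(\zeta_j^{-k})F$, you avoid repeating the DFT-on-vectors computation of the scalar proof and instead obtain the block diagonalization by pure algebra with the mixed-product identity. A literal transcription of the scalar argument would instead track the action of $\Mb$ on an $m$-vector-valued sequence $\gb$ through the block DFT; your route is shorter and makes the structure more transparent, at no cost in rigor.
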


The proof of Lemma~\ref{matrixdiagonalizationlem} will be omitted, as it follows the same lines as that of Lemma~\ref{diagonalizationlem} with straight-forward modification to the multivariate case. 
Clearly the inverse
\begin{equation}
\label{matrixMdiaginv}
\Mb^{-1}=\Fb^*\text{\rm diag}\big(M(\zeta_{-N+1})^{-1},M(\zeta_{-N+2})^{-1},\dots,M(\zeta_N)^{-1}\big)\Fb
\end{equation}
is also block-circulant, and 
\begin{equation}
\label{matrixSbrepr}
\Sb =\Fb^*\text{\rm diag}\big(\zeta_{-N+1}I_m,\zeta_{-N+2}I_m,\dots,\zeta_N I_m\big)\Fb.
\end{equation}
However, unlike the scalar case block-circulant matrices do not commute in general. 

Given Lemma~\ref{matrixdiagonalizationlem}, we are now in a position to reformulate Theorems~\ref{mainthm(matrix)} and \ref{optthm(matrix)} in matrix from. 

\begin{thm}\label{blockmainthm_matrix}
Let $C\in\mathfrak{C}_+^{(m,n)}(N)$, and let $\Cb$ be the corresponding block-circulant matrix \eqref{Cbmatrix} and 
\eqref{blockToeplitz} the corresponding block-Toeplitz matrix.  Then, for each  positive-definite banded $2mN\times 2mN$ block-circulant matrices 
\begin{equation}
\label{matrixPbmatrix}
\Pb =\sum_{k=-n}^n S^{-k}\otimes p_k I_m, \quad p_{-k}=\bar{p}_k
\end{equation}
of order $n$, where $P(\zeta)=\sum_{k=-n}^n p_k \zeta^{-k}\in\mathfrak{P}_+^{(1,n)}(N)$, there is a unique sequence $Q=(Q_0,Q_1,\dots,Q_n)$ of $m\times m$ matrices defining a positive-definite banded $2mN\times 2mN$ block-circulant matrix
\begin{equation}
\label{matrixQbmatrix}
\Qb =\sum_{k=-n}^n S^{-k}\otimes Q_k, \quad Q_{-k}=Q_k^*
\end{equation}
of order $n$ such that 
\begin{equation}
\label{mSigmab=QbinvPb}
\Sigmab=\Qb^{-1}\Pb
\end{equation}
is a block-circulant extension \eqref{Sigmabmatrix} of $\Cb$. The block-circulant matrix  \eqref{mSigmab=QbinvPb} is the unique maximizer of the function
\begin{equation}
\label{mprimal_matrix}
\mathcal{I}_{\Pb}(\Sigmab) = \trace(\Pb\log\Sigmab)
\end{equation}
subject to 
\begin{equation}
\label{mmomentcondmatrixb}
\Eb_n\Tr\Sigmab \Eb_n =\Tb_n, \quad \text{where\;} \Eb_n =\begin{bmatrix}\Ib_{mn}\\{\bold 0}\end{bmatrix}.
\end{equation}
Moreover, $\Qb$ is the unique optimal solution of the problem to minimize
\begin{equation}
\label{dual_matrix2}
\mathcal{J}_{\Pb}(\Qb)= \trace(\Cb\Qb) - \trace(\Pb\log\Qb)
\end{equation}
over all positive-definite banded $2mN\times 2mN$ block-circulant matrices \eqref{matrixQbmatrix} of order $n$.  The functional $\mathcal{J}_{\Pb}$ is strictly convex.  
\end{thm}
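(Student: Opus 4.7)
The plan is to reduce the matrix-valued statement to the symbol-level Theorems~\ref{mainthm(matrix)} and \ref{optthm(matrix)} via the simultaneous block-diagonalization provided by Lemma~\ref{matrixdiagonalizationlem}. Since $\Pb$ has symbol $P(\zeta)I_m$ with $P\in\mathfrak{P}_+^{(1,n)}(N)$ and any admissible $\Qb$ has symbol $Q(\zeta)\in\mathfrak{P}_+^{(m,n)}(N)$, Lemma~\ref{matrixdiagonalizationlem} gives
\begin{equation*}
\Qb^{-1}\Pb=\Fb^*\operatorname{diag}\!\big(P(\zeta_{-N+1})Q(\zeta_{-N+1})^{-1},\dots,P(\zeta_N)Q(\zeta_N)^{-1}\big)\Fb,
\end{equation*}
where the scalar factor $P(\zeta_j)$ commutes with $Q(\zeta_j)^{-1}$. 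Hence $\Sigmab:=\Qb^{-1}\Pb$ is a Hermitian block-circulant matrix with symbol $\Phi(\zeta)=P(\zeta)Q(\zeta)^{-1}$ of the form \eqref{Phimatrix=PQinv}. The moment condition \eqref{mmomentcondmatrixb} extracts the leading $(n+1)\times(n+1)$ block triangle of $\Sigmab$; by the block-circulant structure this is equivalent to the first $n+1$ block coefficients of its symbol equaling $C_0,C_1,\dots,C_n$, i.e.\ to \eqref{matrixmoments}. Conversely, since $n<N$ the coefficient-to-values map $Q\mapsto\{Q(\zeta_j)\}$ is injective on pseudo-polynomials of degree at most $n$, so distinct banded $\Qb$ give distinct symbols.

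Next I translate the cost functionals. Using Lemma~\ref{matrixdiagonalizationlem}, $\log\Qb=\Fb^*\operatorname{diag}(\log Q(\zeta_j))\Fb$, so
\begin{equation*}
\trace(\Pb\log\Qb)=\sum_{j=-N+1}^N P(\zeta_j)\trace\log Q(\zeta_j)=\sum_{j=-N+1}^N P(\zeta_j)\log\det Q(\zeta_j),
\end{equation*}
which by \eqref{discretemoments} equals $2N\int_{-\pi}^\pi P(e^{i\theta})\log\det Q(e^{i\theta})d\nu$. A completely analogous identity, based on Plancherel for the block DFT (cf.\ the scalar argument in Section~\ref{circulantsec}), yields
\begin{equation*}
\trace(\Cb\Qb)=\sum_{j=-N+1}^N\trace\!\big(C(\zeta_j)Q(\zeta_j)\big)=2N\langle C,Q\rangle,
\end{equation*}
and similarly $\trace(\Pb\log\Sigmab)=2N\int_{-\pi}^\pi P(e^{i\theta})\log\det\Phi(e^{i\theta})d\nu$. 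Consequently
\begin{equation*}
\mathcal{J}_\Pb(\Qb)=2N\,\mathbb{J}_P(Q),\qquad \mathcal{I}_\Pb(\Sigmab)=2N\,\mathbb{I}_P(\Phi),
\end{equation*}
so the matrix-level primal--dual pair \eqref{mprimal_matrix}--\eqref{dual_matrix2} is a positive scalar multiple of the symbol-level pair \eqref{matrixprimal}--\eqref{dual}. Applying Theorems~\ref{mainthm(matrix)} and \ref{optthm(matrix)} then furnishes existence and uniqueness of the optimal symbol $\hat Q\in\mathfrak{P}_+^{(m,n)}(N)$, which in turn yields the unique banded block-circulant $\Qb$ making $\Sigmab=\Qb^{-1}\Pb$ a block-circulant extension of $\Cb$, and the primal maximizer statement follows from $\Phi=P\hat Q^{-1}$.

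For strict convexity of $\mathcal{J}_\Pb$, the term $\trace(\Cb\Qb)$ is linear in the coefficients $Q_0,\dots,Q_n$, so it suffices to show that $\Qb\mapsto-\trace(\Pb\log\Qb)$ is strictly convex on the cone of positive-definite banded block-circulants of order $n$. By the identity above this equals $-\sum_j P(\zeta_j)\log\det Q(\zeta_j)$; each summand is strictly convex in $Q(\zeta_j)$ (strict concavity of $\log\det$ on the positive-definite cone), and $P(\zeta_j)>0$ by hypothesis. Strict convexity in the block-circulant parameters then follows from the injectivity of $(Q_0,\dots,Q_n)\mapsto (Q(\zeta_{-N+1}),\dots,Q(\zeta_N))$ noted above, which rules out flat directions.

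The main obstacle is the bookkeeping needed to justify the trace-to-integral identities carefully in the multivariate, non-commutative setting --- in particular verifying that the scalar nature of the symbol of $\Pb$ allows $P(\zeta_j)$ to factor through the matrix logarithm $\log Q(\zeta_j)$ so that one obtains $P(\zeta_j)\log\det Q(\zeta_j)$ rather than a more complicated non-commutative expression. Once these identities are in place, the theorem reduces mechanically to its symbol-level counterparts, whose proofs are imported from \cite{LPcirculant} as indicated after Theorem~\ref{optthm(matrix)}.
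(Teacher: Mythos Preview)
Your proposal is correct and follows essentially the same approach as the paper: the paper does not give an explicit proof of Theorem~\ref{blockmainthm_matrix} but simply states that, given Lemma~\ref{matrixdiagonalizationlem}, one is ``in a position to reformulate Theorems~\ref{mainthm(matrix)} and \ref{optthm(matrix)} in matrix form,'' exactly paralleling the scalar reduction of Theorems~\ref{mainthm_matrix} and \ref{optthm_matrix} to Theorems~\ref{mainthm} and \ref{optthm} via Lemma~\ref{diagonalizationlem}. Your write-up supplies the bookkeeping the paper omits --- the trace-to-integral identities, the Plancherel step for $\trace(\Cb\Qb)$, the observation that the scalar symbol of $\Pb$ commutes through $\log Q(\zeta_j)$ to produce $P(\zeta_j)\log\det Q(\zeta_j)$, and the injectivity argument for strict convexity --- all of which are implicit in the paper's one-line justification.
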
 

For $\Pb=\Ib$ we obtain the maximum-entropy solution considered in \cite{Carli-FPP}, where the primal problem to maximize $\mathcal{I}_{\Ib}$ subject to \eqref{mmomentcondmatrixb} was presented. In \cite{Carli-FPP} there was also an extra constraint \eqref{circulantcondition}, which, as we can see, is not needed, since it is automatically fulfilled. For this reason the dual problem presented in  \cite{Carli-FPP} is more complicated than merely minimizing $\mathcal{J}_{\Ib}$. 

Next suppose we are also given the (scalar) logarithmic moments \eqref{cepstrum} and that  $C\in\mathfrak{C}_+^{(m,n)}(N)$. Then, if the problem to maximize $\text{\rm tr}\{\log\Sigmab\}$ subject to \eqref{mmomentcondmatrixb} and \eqref{cepstrum}  over all positive-definite block-circulant matrices \eqref{Sigmabmatrix} has a solution, then it has the form 
\begin{equation}
\label{PbQb12Sigmab}
\Sigmab=\Qb^{-1}\Pb
\end{equation}
where the $(\Pb,\Qb)$ is a solution of the dual problem to minimize 
\begin{equation}
\label{Jmatrix}
\mathbf{J}(\Pb,\Qb) =\text{\rm tr}\{\Cb\Qb\} -\text{\rm tr}\{\Gammab\Pb\} + \text{\rm tr}\{\Pb\log\Pb\Qb^{-1}\},
\end{equation}
over all positive-definite block-circulant matrices of the type \eqref{matrixPbmatrix} and \eqref{matrixQbmatrix} with the extra constrain $p_0=1$, where $\Gammab$ is the Hermitian circulant matrix with symbol
\begin{equation}
\label{M(z)}
\Gamma(\zeta)=\sum_{k=-n}^n \gamma_k \zeta^{-k}, \quad \gamma_{-k}=\bar{\gamma}_k.
\end{equation}

However, the minimum of \eqref{Jmatrix} may end up on the boundary, in which case the constraint \eqref{cepstrum} may fail to be satisfied. Therefore, as in the scalar case, we need to regularize the problem by instead minimizing 
\begin{equation}
\label{magtrixJlambda}
\mathbf{J}_\lambda(\Pb,\Qb) =\text{\rm tr}\{\Cb\Qb\} -\text{\rm tr}\{\Gammab\Pb\} + \text{\rm tr}\{\Pb\log\Pb\Qb^{-1}\} - \lambda\,\text{\rm tr}\{\log\Pb\}.
\end{equation}
This problem has a unique optimal solution \eqref{PbQb12Sigmab} satisfying \eqref{mmomentcondmatrixb}, but not   \eqref{cepstrum}. The appropriate logarithmic moment constraint is obtained as in the scalar case by exchanging  $\gamma_k$ for $\gamma_k+\varepsilon_k$ for each $k=1,2,\dots,n$, where $\varepsilon_k$ is given by \eqref{epsilon}. 

Again each solution leads to an ARMA model
\begin{equation}
\label{matrixARMA}
\sum_{k=-n}^n Q_k y(t-k) = \sum_{k=-n}^n p_k e(t-k),
\end{equation}
where $\{e(t)\}$ is the conjugate process of $\{y(t)\}$, $Q_0,Q_1,\dots,Q_n$ are $m\times m$ matrices, whereas  $p_0,p_1,\dots,p_n$ are scalar with $p_0=1$. 

We illustrate this theory with a simple example from \cite{LMPcirculantMult}, where a covariance sequence $C:=(C_0,C_1,\dots C_n)$ and a cepstral sequence $\gammab:=(\gamma_1,\gamma_2,\dots,\gamma_n)$ have been computed from a two-dimensional ARMA process with a  spectral density $\Phi:=PQ^{-1}$, where $P$ is a scalar pseudo-polynomial of degree three and $Q$ is a $2\times 2$ matrix-valued pseudo-polynomial of degree $n=6$.
Its zero and poles are illustrated in Fig.~\ref{fig:ARMA_n6_pzmap}. 
\begin{figure}
	\centering
	\includegraphics[width=0.35\textwidth]{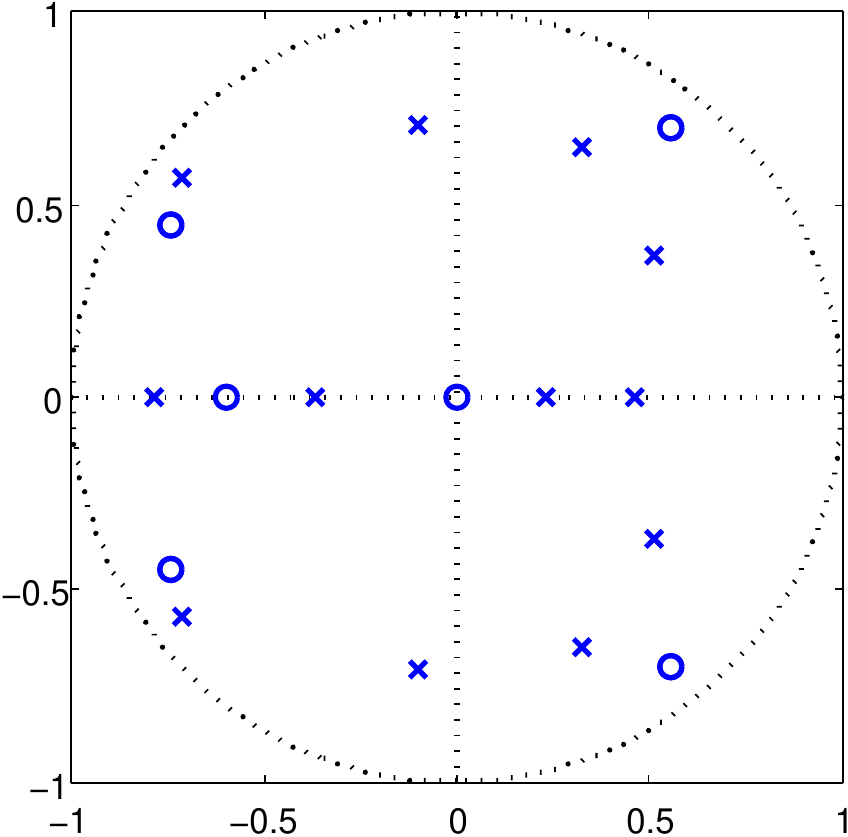} 
	\caption{Poles and zeros of an ARMA $2\times 2$ model of order $n=6$.}
	\label{fig:ARMA_n6_pzmap}
\end{figure}
\begin{figure}[h!]
	\centering
	\includegraphics[width=0.60\textwidth]{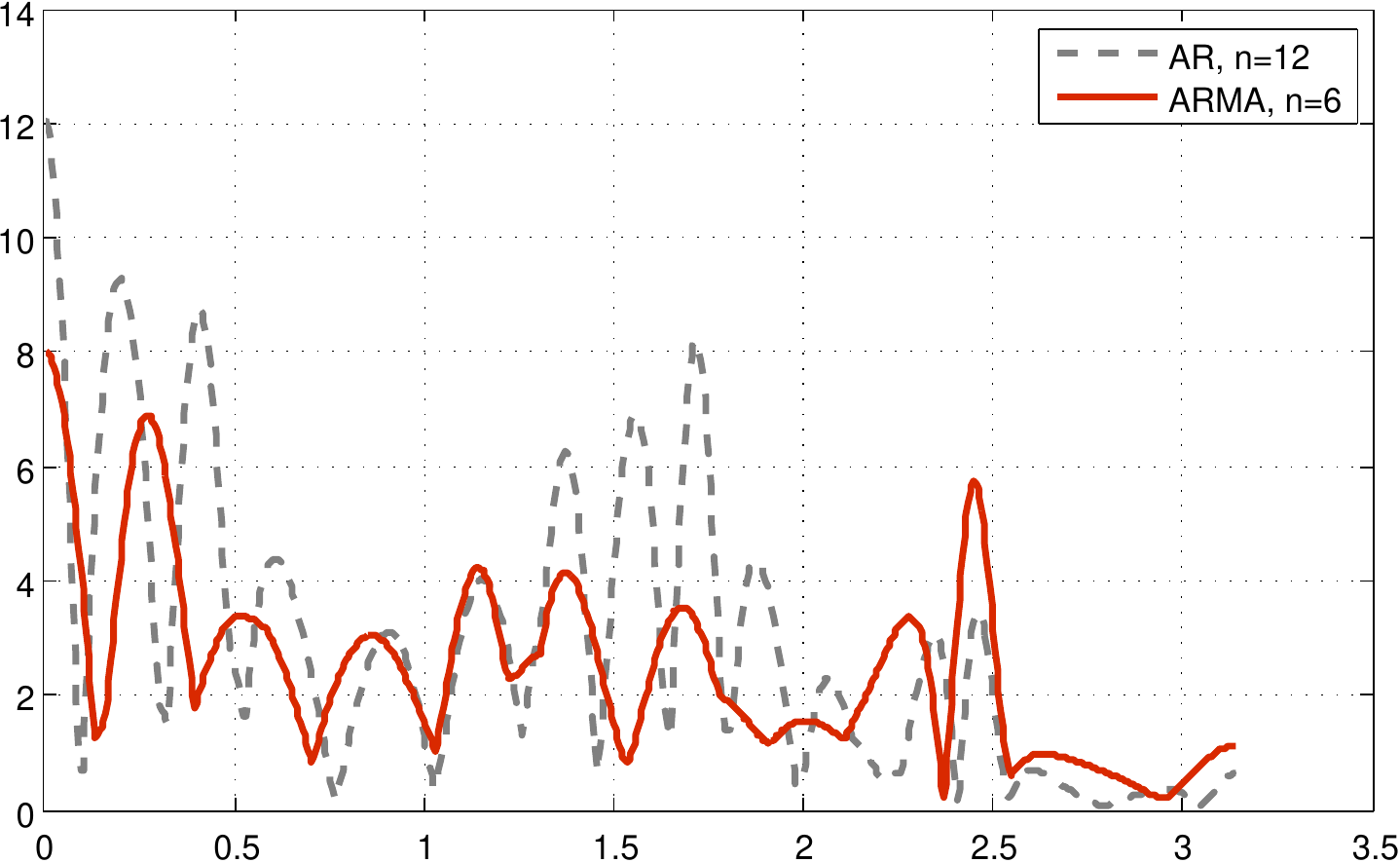}
	\caption{The norm of the approximation error for a bilateral AR of order $12$ for $N=64$ and a bilateral ARMA of order $6$ for $N=32$}
	\label{fig:ARMA_n6_errorNorm} 
\end{figure}  

Given $C$ and $\gammab$, we apply the procedure in this section to determine a pair $(\Pb,\Qb)$ of order $n=6$. 
For comparison we also compute an bilateral AR approximation with $n=12$ fixing $\Pb=\Ib$. As illustrated in Fig.~\ref{fig:ARMA_n6_errorNorm}, the bilateral ARMA model of order $n=6$ computed with $N=32$ outperforms the bilateral AR model of order $n=12$ with $N=64$.

The results of Section~\ref{bilateralsec} can  also be generalized to the multivariate case along the lines described in \cite{Carlietal}.

\section{Application to image processing}\label{imagesec}

In \cite{Carli-FPP} the circulant maximum-entropy solution has been used to model spatially stationary images ({\em textures}) \cite{Soatto}  in terms of (vector-valued) stationary periodic processes. The image could be thought of as an $m\times M$ matrix of pixels where the columns form a $m$-dimensional reciprocal process $\{y(t)\}$, which can extended to
a periodic process with period $M>N$ outside the interval $[0,N]$; see Figure~\ref{imagefig}. 
\begin{figure}[htb]
\begin{center}
\includegraphics[totalheight=4.5cm]{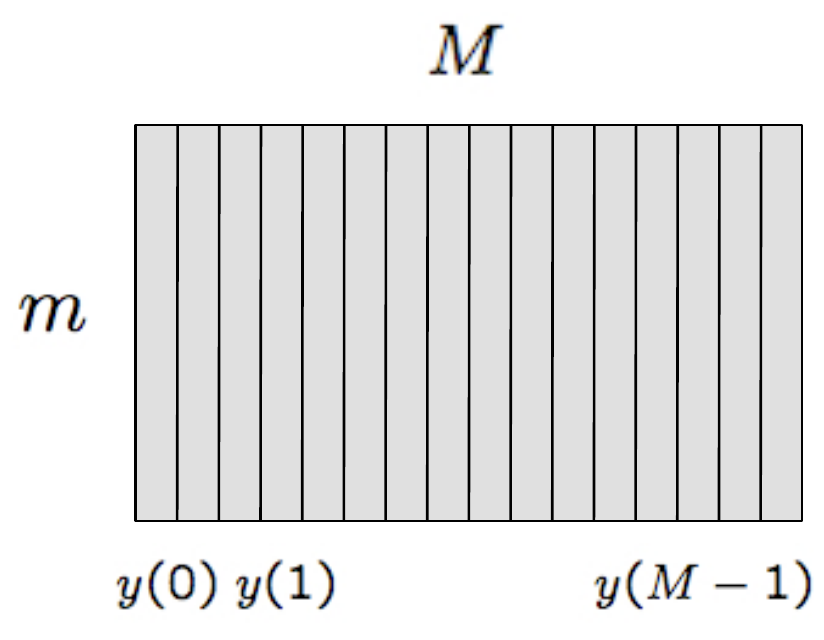}
\caption{An image modeled as a reciprocal vector process}
\label{imagefig}
\end{center} 
\end{figure}
 This imposes the constraint $C_{M-k}=C_k\Tr$ on the covariance lags  $C_k:=E\{ y(t+k)y(t)\Tr\}$, 
leading to a circulant Toeplitz matrix. The problem considered in \cite{Carli-FPP} is to model  the process $\{y(t)\}$ given (estimated)  $C_0,C_1,\dots,C_n$, where $n<N$ with an efficient low-dimensional model. This is precisely a problem of the type considered in Section~\ref{multivariatesec}. 

Solving the corresponding circulant maximum-entropy problem (with $\Pb=\Ib$), $n=1$, $m=125$ and $N=88$, Carli, Ferrante, Pavon and  Picci  \cite{Carli-FPP} derived a bilateral model of the images at the bottom row of Figure~\ref{3images} to compress the images in the top row, thereby achieving a compression of  5:1.
\begin{figure}[htb]\begin{center}
\includegraphics[totalheight=4cm]{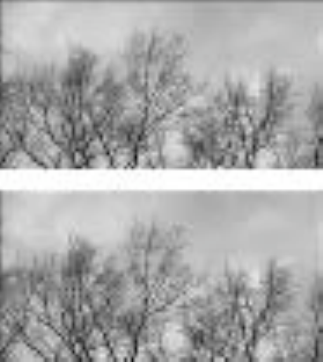}
\qquad\qquad
\includegraphics[totalheight=4cm]{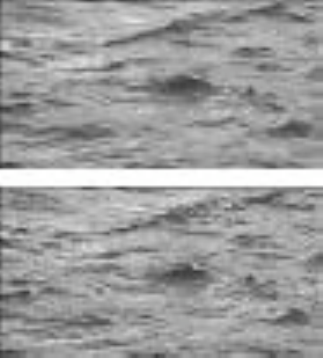}
\qquad\qquad
\includegraphics[totalheight=4cm]{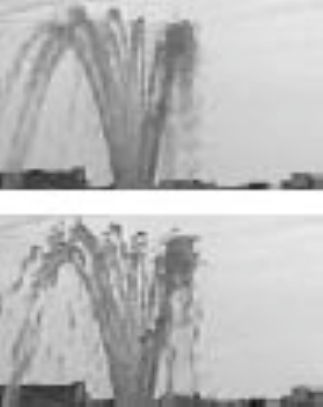}
\caption{Three images modeled by reciprocal processes (original at bottom).}\label{3images}\end{center} 
\end{figure}
 While the compression ratio falls short of competing with current jpeg standards (typically 10:1 for such quality), our approach suggests a new stochastic alternative to image encoding. Indeed the results in Figure~\ref{3images} apply  just the maximum entropy solution of order $n=1$.  Simulations such as those in Figure~\ref{figures5} suggest that much better compression can be made using bilateral ARMA modeling. 
 
 An alternative approach to image compression using  multidimensional covariance extension can be found in the recent paper \cite{RKL}.


%

\end{document}